\providecommand{\U}[1]{\protect\rule{.1in}{.1in}}
\newtheorem{theorem}{Theorem}
\newtheorem{corollary}[theorem]{Corollary}
\newtheorem{definition}[theorem]{Definition}
\newtheorem{lemma}[theorem]{Lemma}
\newtheorem{remark}[theorem]{Remark}
\newenvironment{proof}[1][Proof]{\noindent\textbf{#1.} }{\ \rule{0.5em}{0.5em}}
\begin{document}

\title{The $\mu$-neutral fractional multi-delayed differential equations}
\author{Mustafa Aydin and Nazim I. Mahmudov\\Department of Mathematics, Eastern Mediterranean University \\Famagusta, 99628, T. R. Northern Cyprus via Mersin 10, Turkey}
\date{}
\maketitle

\begin{abstract}
The $\mu$-neutral linear fractional multi-delayed differential nonhomogeneous system with noncommutative coefficient matrices is introduced. The novel $\mu$-neutral multi-delayed perturbation of Mittag-Leffler type matrix function is proposed. Based on this, an explicit solution to the system is investigated step by step. The existence uniqueness of solutions to $\mu$-neutral nonlinear fractional multi-delayed differential system is obtained with regard to the supremum norm. The notion of stability analysis in the sense of solutions to the described system is discussed on the grounds of the fixed point approach.

\end{abstract}



\section{ Introduction and Preliminaries }

Fractional Calculus which is seen as a generalisation of the ordinary calculus  has drawn many of researchers' attention in recent years. And also it has been started to be exponentially utilized in so many different kinds of areas like finance, engineering,  neurons, electric conductance, diffusion, thermodynamics, computed tomography, mechanism, mathematical physics.

Fractional(ordinary) delay differential equations\cite{i2}\cite{i3}\cite{Aydinsssss}\cite{i5}\cite{1}  are of a considerable importance since they permit to describe such systems that the rate of change depends both on the present and delayed states and on the past state unlike other systems. They are very often exploited in propagation of energy or information and transport in interdependent systems (see e.g. \cite{X},\cite{Y},\cite{Z} and reference therein). In reference \cite{i1}, Khusainov and  Shuklin  were able to find  a solution of the below linear fractional delayed differential system in terms of producing the delayed exponential matrix.
\begin{equation}
\left\{
\begin{array}
[c]{l}%
 w^{'} \left( t\right) =Bw\left(  t\right)  +Fw\left(
t-r\right)     ,\ \ t>0 \ \ r>0 \ \ \text{(delay)}  ,\\
\ \ \ \ w\left(  t\right)  =\phi\left(  t\right)  ,\ \ -r\leq t\leq 0.\\
\end{array}
\right.  \label{birdenklem}
\end{equation}
The drawback of  system (\ref{birdenklem}) is to assume that the coefficient matrices $B$ and $F$ must be commutative.  In reference\cite{i2}, Li and Wang consider the fractional version of system (\ref{birdenklem}) in the case of $F=\Theta$. The drawback is also in progress because of validation of commutativity of coefficient matrices when $F=\Theta$. In reference\cite{i3},  Mahmudov investigates a general version of fractional delay differential system whose coefficient matrices do not need to be either commutative or zero. Having proposed a newly delay perturbation of Mittag-Leffler type matrix function, Mahmudov gives an excellent representation of solutions of the mentioned system. In the sequel, Mahmudov extends the fractional delay differential equations in the work\cite{i3} to multi-delayed version in the work\cite{i5} and solve it.

In the neutral version of the fractional(ordinary) delay differential equations \cite{i9},
\cite{6}, \cite{m10} the fractional(ordinary) derivative of the unknown function appears mostly with delays and rarely without delays. These kinds of systems are used from population growth to the motion of radiation electrons, spread of epidemic. In reference\cite{6}, Pospisil and Skripkova investigate the following linear neutral fractional delay differential equations
\begin{equation}\label{int5}
\left\{
\begin{array}
[c]{l}%
 w^{'} \left( t\right)- Aw^{'} \left( t-r\right) =Bw\left(  t\right)  +Fw\left(
t-r\right)+f(t)     ,\ \ t>0 \ \ r>0 \ \ \  ,\\
\ \ \ \ w\left(  t\right)  =\phi\left(  t\right)  ,\ \ -r\leq t\leq 0,\\
\end{array}
\right.
\end{equation}
where $f$ is continuous from $[0,\infty)$ to $\mathbb{R}^n$, $\phi$ is continuously differentiable form $[-r,0]$ to $\mathbb{R}^n$  and $r$ is a retardation. The coefficient matrices $A, B, F$ are permutable, that is $AB=BA$, $AF=FA$,
$BF=FB$. Zhang et al.\cite{i9} look into the representation of the solution to the neutral fractional linear differential system having a constant delay
\begin{equation}\label{int3}
\begin{array}
[c]{l}%
{^{\mathfrak{C}}\beth^{\alpha}_{0^+}}\left( w \left( t\right)- Aw\left( t-r\right) \right)=Bw\left( t\right)  +Fw\left(
t-r\right)+f(t)     ,\ \ t>0,\\
\ \ \ \ w\left(  t\right)  =\phi\left(  t\right)  ,\ \ -r\leq t\leq 0,\\
\end{array}
\end{equation}
where ${^{\mathfrak{C}}\beth^{\alpha}_{0^+}}$ is Caputo fractional derivative of order $\alpha$, $0<\alpha<1$, $r>0$, $f$ is continuous from $[0,\infty)$ to $\mathbb{R}^n$ $A,B,F \in \mathbb{R}^{n \times n}$,  and $\phi$ is continuously differentiable form $[-r,0]$ to $\mathbb{R}^n$. In an attempt to solve system (\ref{int3}),  Zhang et al.\cite{i9} exploited Laplace integral transform. This produced some drawback and mistakes because the representation of power series of the fundamental solution is unknowable.

In reference\cite{Almeida}, Almeida consider a Caputo type fractional derivative with respect to another function. Some features between the fractional derivative and integral, Fermat's Theorem, Taylor’s Theorem, and a lot more are  studied. In reference\cite{Almeidaveark}, Almeida et al. questionnaire existence and uniqueness results for the initial value problem of nonlinear fractional differential equations involving a Caputo-type fractional derivative with respect to another function with the help of the some standard fixed point theorems and develop the Picard iteration method for solving numerically the problem and obtain results on the long-term behavior of solutions.

Motivated by the above cited studies, we handle the below $\mu$-Caputo type (more general) fractional neutral differential multi-delayed equations with noncommutative matrices
\begin{equation}
\begin{array}
[c]{l}%
{_{0^+}^{{C}}\beth^{\alpha}_{\mu}}   \left[ w \left( t\right)- {\displaystyle\sum_{i=1}^{d}} A_{i} w \left( t-r_{i}\right) \right]  =Bw\left( t\right)  +{\displaystyle\sum_{i=1}^{d}}F_{i}w\left(t-r_{i}\right)  +\daleth\left(  t\right), t\in\left(  0,T\right],\\
\ \ \ \ w\left(  t\right)  =\varphi\left(  t\right)  ,\ \ -r\leq t\leq0,\\
\end{array}  \label{sorkok}%
\end{equation}
where the Caputo fractional derivative ${_{0^+}^{{C}}\beth^{\alpha}_{\mu}}$ is of order $\alpha \in \left(0,1  \right) $. For each of $i=1,2,3, \dots, d$,   $A_{i}$,   $B$, $C_{i}$ are square coefficient constant matrices  which do not need to be permutable and $r_{i}>0$ is a retardation and   $r:=\max\{r_i: i=1,2,3, \dots, d\}$. An arbitrary vector function $\phi\left(  x\right)$ is continuously differentiable and $\daleth \in C\left( \left[0, T \right],  \mathbb{R}^n  \right)$ with $T=ld$ for a fixed $l\in \mathbb{N}$. After finding the explicit solutions of (\ref{sorkok}), we obtain the explicit solutions to the below equations (\ref{sorkok2})
\begin{equation}
\begin{array}
[c]{l}%
{_{0^+}^{{C}}\beth^{\alpha}_{\mu}}   \left[ w \left( t\right)- {\displaystyle\sum_{i=1}^{d}} A_{i} w \left( t-r_{i}\right) \right]  =Bw\left( t\right)  +{\displaystyle\sum_{i=1}^{d}}F_{i}w\left(t-r_{i}\right)  +\daleth\left(  t,w(t)\right),\\
\ \ \ \ w\left(  t\right)  =\varphi\left(  t\right)  ,\ \ -r\leq t\leq0,\\
\end{array} \label{sorkok2}%
\end{equation}
where $\daleth \in C\left( \left[ 0, T \right] \times \mathbb{R}^n ,  \mathbb{R}^n  \right)$    and the others are the same as (\ref{sorkok}).

\begin{remark} By choosing $A_i=\Theta$, $i=1,2,3, \dots, d$ and $\mu(t)=t$, $\mu$-Caputo fractional neutral differential multi-delayed equations with nonpermutable  matrices overlaps with fractional linear multi-delay differential equations in the study \cite{i5}.
\end{remark}

In the current work,
\begin{itemize}
  \item we introduce the $\mu$-neutral Caputo type fractional linear(or semi-linear) multi-delayed differential equations with non-permutable constant coefficient matrices,
  \item we propose newly the $\mu$-neutral multi-delayed perturbation of two parameter Mittag-Leffler type matrix function,
  \item we present a representation of a solution for the $\mu$-neutral Caputo type fractional linear(or semi-linear) multi-delayed differential equations with non-permutable constant coefficient matrices by sharing  the $\mu$-neutral multi-delayed perturbation of two parameter Mittag-Leffler type matrix function,
  \item we examine the existence and uniqueness of solutions of the $\mu$-neutral fractional multi-delayed differential equations' system,
  \item we show the stability of the $\mu$-neutral fractional order multi-delayed differential system in the sense of Ulam-Hyers, and illustrate the theoretical findings.
\end{itemize}

Now we remind a couple of well-recognized basic notions in the literature.

For $n \in  \{1,2,3,... \}$, the space $C^n(\left[  0,T\right]  ,\mathbb{R}^{n})$ is all of
continuously $n$th order differentiable vector-valued functions from $\left[  0,T\right]  $ to
$\mathbb{R}^{n}$  with  $\left\Vert x\right\Vert
_{C}:=\sup_{t\in\left[  0,T\right]  }\left\Vert x(t)\right\Vert \ $ for a norm
$\left\Vert .\right\Vert $ on $\mathbb{R}^{n}$. For $n=0$,  $C(\left[  0,T\right]  ,\mathbb{R}^{n})=C^0(\left[  0,T\right]  ,\mathbb{R}^{n})$ is all of continuous vector-valued functions. Let $AC[0, T]$ be the space of functions  which are absolutely continuous on $[0,T]$.  We denote by $AC^n[0, T]$ the space of complex-valued functions $f(x)$ which have continuous derivatives up to order $n -1$ on $[0, T]$ such  that $f^{(n-1)}(x) \in AC[0,T]$. Let  $\daleth$ and an increasing function $\mu$ on $[0, T]$ be   integrable and continuously differentiable, respectively and let $\mu'(t)\neq 0$ \ $t\in [0, T]$. $\mu$-Riemann-Liouville fractional integrals\cite{sirvastava}\cite{Almeida}  of $\daleth \in AC^n[0, T]$ of order $\alpha\in \mathbb{R^{+}}$ and $n\in \mathbb{N}$ are given by
\begin{eqnarray*}
  \left({}_{0^{+}}^{RL}\gimel_{\mu}^{\alpha}\daleth \right)(t) &:=& \frac{1}{\Gamma(\alpha)}\int_{0}^{t}\left(\mu(t)-\mu(s)\right)^{\alpha-1}\daleth(s)d\mu(s),
\end{eqnarray*}
where $\Gamma\left(  \alpha \right)  = \int_{0}^{\infty} t^{\alpha-1}e^{-t}dt$ with $ Re\left(  \alpha \right)  >0$. $\mu$-Riemann-Liouville fractional derivatives\cite{sirvastava}\cite{Almeida} of $\daleth \in AC^n[0, T]$ of order $\alpha >0$ are given by
\begin{eqnarray*}
 \left( {}_{0^{+}}^{RL}\beth_{\mu}^{\alpha}\daleth\right)(t)    &=& \frac{1}{\Gamma(n-\alpha)}\left(\frac{d}{d\mu (t)}\right)^{n}
   \int_{0}^{t}\left(\mu(t)-\mu(s)\right)^{n-\alpha-1}f(s)d\mu(s),
\end{eqnarray*}
where $n=[\alpha]+1$.  If $\daleth\in AC^{n}\left([0, T],\mathbb{R}\right),\mu\in C^{n}\left([0, T],\mathbb{R}\right)$ with $\mu$ is increasing and $\mu'(t)\neq 0$ for every $t\in [0, T]$, then the $\mu$-Caputo fractional derivatives\cite{sirvastava}\cite{Almeida} of $\daleth$ of order $\alpha$ is defined as
\begin{equation}
  {}_{0^{+}}^{\ C}\beth_{\mu}^{\alpha}\daleth(t)
  :={}_{0^{+}}^{RL}\gimel_{\mu)}^{n-\alpha}\left(\frac{1}{\mu'(t)}\frac{d}{dt}\right)^{n}\daleth(t).
\end{equation}
From \cite{sirvastava} and \cite{Almeida}, We have, for $\mathfrak{R}(\alpha)\geq 0$, $\mathfrak{R}(\beta)> 0$, and $\alpha \in \left(0,1  \right) $,
\begin{equation*}
  \left({}_{0^{+}}^{\ C}\beth_{\mu}^{\alpha}\daleth\right)(t)
  ={}_{0^{+}}^{\ RL}\beth_{\mu}^{\alpha}\left[\daleth(t)-\daleth(0)\right],\ \ {}_{0^{+}}^{C}\beth_{\mu}^{\alpha}\left[\mu(t)\right]^{\beta-1}=
  \frac{\Gamma(\beta)}{\Gamma(\beta-\alpha)}\left[\mu(t)\right]^{\beta-\alpha-1}
\end{equation*}

\section{Main Results}

In the current section, we present our findings to begin with defining  the $\mu$-neutral multi-delayed perturbation of Mittag-Leffler type matrix function. We look for  a solution of system (\ref{sorkok}) and prove the existence and uniqueness of solutions and Ulam-Hyers stability of system (\ref{sorkok}).

\subsection{The $\mu$- neutral multi-delayed perturbation of Mittag-Leffler type matrix function}

For now on,  we exploit  the $\mu$-{\footnotesize NMDP} of {\footnotesize ML} function for the $\mu$-neutral multi-delayed perturbation of Mittag-Leffler type matrix function.

It is clear that a generalisation of the exponential function is the {\footnotesize ML} function.  Delayed and Delayed perturbed and  multi-delayed perturbed  versions which are called delayed  Mittag-Leffler type matrix function, delayed perturbation of Mittag-Leffler type matrix function and  multi-delayed perturbation of Mittag-Leffler type matrix function by the numbers are  presented in the work \cite{i3}\cite{i5}\cite{1} respectively. In the current study,  the $\mu$-{\footnotesize NMDP} of {\footnotesize ML} function is given through identifying matrix equation for $Q_j\left(  s\right)$ for $j=0,1,2, \dots$
\begin{align}\label{qkme}
 Q_{j+1}\left(  s_1, s_2, \dots, s_d\right)  & = B Q_{j}\left(  s_1, s_2, \dots, s_d\right)
 + \sum_{i=1}^{d}F_{i}Q_{j}\left(  s_1, s_2, \dots, s_{i}-r_{i}, \dots, s_d\right)\nonumber\\
  &+ \sum_{i=1}^{d}A_{i}Q_{j+1}\left(  s_1, s_2, \dots, s_{i}-r_{i}, \dots, s_d\right),
 \end{align}
$$Q_{0}\left(  s_1, s_2, \dots, s_d\right)=Q_{j}\left(  -r_1,  \dots, s_d\right)=Q_{j}\left(  s_1,  \dots, -r_d\right) =\Theta, $$
 $$Q_{1}\left(  0,\dots,0\right)= I, \ \  \\ Q_{1}\left( s_1, s_2, \dots, s_d\right)= \Theta, \ s_i\neq 0. $$
where $s_{i}=0,r_{i}, 2r_{i}, \dots$, $\Theta$ is the zero matrix, and $I$ is the unit matrix.

In the following definition, we give   the function of {$\mu$-\footnotesize NMDP} of {\footnotesize ML} type matrix  with the aid of  the multivariate function $Q_{k+1}\left(  s_1, s_2, \dots, s_d\right)$
\begin{definition}
The $\mu$-neutral multi-delayed perturbation of the Mittag-Leffler type matrix function $ {\mathcal{X}_\mu^{\alpha,\beta}} (t,s)$  is given by

\begin{equation}\label{mainm}
   {\mathcal{X}_\mu^{\alpha,\beta}} (t,s)= \left\{\begin{array}{cc}
                                           \Theta, \ & -r\leq t < 0, \ s\geq 0, \\
  {\displaystyle\sum\limits_{k=0}^{\infty}}{\displaystyle\sum\limits_{i_1,i_2,\dots,i_d=0}^{\infty}}\mathcal{Q}_{k+1}
   \frac{\left[\mu\left(t\right)-\mu\left(s+ {\sum_{j=1}^{d}} i_jr_j\right)\right]_+^{k\alpha+\beta-1}}{\Gamma(k\alpha+\beta)}, & t\geq 0, \  s\geq0,
                                                 \end{array}\right.
\end{equation}
where $\mathcal{Q}_{k+1}:=Q_{k+1}(i_1r_1,\dots, i_dr_d)$, $[t]_+=\max(0,t)$.
\end{definition}

\begin{remark}
Let $\mu(t)=t$, and $s=0$ in  $ {\mathcal{X}^{\alpha,\beta}_\mu}(t,s)$. Then we have,
\begin{enumerate}
\item For $A_{i}=F_{i}=\Theta$, $i=1,2, \dots, d$,   The {$\mu$-\footnotesize NMDP}  {\footnotesize ML}  function reduces to the {\footnotesize ML} function\cite{4}   i.e. $ {\mathcal{X}^{\alpha,\beta}_\mu}(t,s) =t^{\beta-1}E_{\alpha, \beta}\left( Bt^{\alpha} \right)$.
\item For  $A_{i}=\Theta$, $i=1,2, \dots, d$ and $F_{i}=\Theta$, $i=2, \dots, d$, then the {$\mu$-\footnotesize NMDP}  {\footnotesize ML}  function matches up with delay perturbation of {\footnotesize ML} function\cite{i3}.
\item For $A_{i}=\Theta$, $i=1,2, \dots, d$ and $F_{i}=\Theta$, $i=2, \dots, d$, $B=\Theta$,  $ {\mathcal{X}^{\alpha,\beta}_\mu}(t,s)$ reduces to delayed {\footnotesize ML} function\cite{1} .
\item  The {$\mu$-\footnotesize NMDP}  {\footnotesize ML} function is not equal to that one of \cite[Def. 3.3]{i5} because the matrix  $Q_k\left(  s\right)$ for $k=0,1,2, \dots$ in (\ref{qkme}) is  different from that  of the study \cite{i5}. Under the condition $A_{i}=\Theta$, $i=1,2, \dots, d$, however,  they coincide.
\item     Since the constant matrices are commutative together with suitable selections, ${\mathcal{X}^{\alpha,\beta}_\mu}(t,s)$ overlaps with $X(t)$ in (2.4) in the work\cite{6}.
\item By depending on suitable selections $r_i, A_i, F_i, B$, $i=1,2,...,d$, one can easily obtain $\varepsilon_{\alpha,\beta}^{r_1,r_2}(\mathcal{A},\mathcal{B},\mathcal{F};x)$ in Definition 3.1 in the work\cite{m10} from $ {\mathcal{X}^{\alpha,\beta}_\mu}(t,s)$.
\end{enumerate}

\end{remark}

\subsection{The $\mu$-neutral multi-delayed Caputo fractional differential equations' analytic solutions }

First of all, we share beneficial theorem and lemma to be exploited in the next proofs.

\begin{theorem}\label{lemmaessential}
Let ${\mathcal{X}^{\alpha,\beta}_\mu}(t,s) $ be as defined in (\ref{mainm}). The following holds true
\begin{equation*}
  {_{0^+}^{{C}}\beth^{\alpha}_{\mu}}\left[  {\mathcal{X}^{\alpha,1}_\mu} \left( t,0 \right) -\sum_{j=1}^{d}A_{j}     {\mathcal{X}^{\alpha,1}_\mu} \left( t, r_{j} \right)\right]= B {\mathcal{X}^{\alpha,1}_\mu} \left( t,0 \right)+  \sum_{j=1}^{d}F_{j}  {\mathcal{X}^{\alpha,1}_\mu}\left( t,r_j\right)
\end{equation*}
\end{theorem}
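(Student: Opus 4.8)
The plan is to substitute the defining series \eqref{mainm} into the left-hand side and differentiate it term by term. For $t\ge 0$ write
\begin{equation*}
{\mathcal{X}^{\alpha,1}_\mu}(t,0)=\sum_{k=0}^{\infty}\sum_{i_1,\dots,i_d=0}^{\infty}Q_{k+1}(i_1r_1,\dots,i_dr_d)\,\frac{\bigl[\mu(t)-\mu(h_{\vec{i}})\bigr]_+^{k\alpha}}{\Gamma(k\alpha+1)},\qquad h_{\vec{i}}:=\sum_{l=1}^{d}i_lr_l,
\end{equation*}
and the analogous series for ${\mathcal{X}^{\alpha,1}_\mu}(t,r_j)$ with $h_{\vec{i}}$ replaced by $r_j+h_{\vec{i}}$. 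On every compact subinterval of $[0,T]$ only finitely many multi-indices contribute (the truncated power vanishes once $h_{\vec{i}}>t$), so these series, and the ones obtained after differentiation, converge locally uniformly and may be handled term by term.

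The next step is to apply ${}_{0^+}^{C}\beth^{\alpha}_{\mu}$ to each summand. The computational ingredient is the shifted power rule
\begin{equation*}
{}_{0^+}^{C}\beth^{\alpha}_{\mu}\,\frac{\bigl[\mu(t)-\mu(c)\bigr]_+^{k\alpha}}{\Gamma(k\alpha+1)}=\frac{\bigl[\mu(t)-\mu(c)\bigr]_+^{(k-1)\alpha}}{\Gamma((k-1)\alpha+1)}\qquad(k\ge 1,\ c\ge 0),
\end{equation*}
which for $c=0$ is exactly the identity ${}_{0^+}^{C}\beth^{\alpha}_{\mu}[\mu(t)]^{\beta-1}=\frac{\Gamma(\beta)}{\Gamma(\beta-\alpha)}[\mu(t)]^{\beta-\alpha-1}$ recalled in the Preliminaries with $\beta=k\alpha+1$, and for $c>0$ follows from the same Beta-function computation after the change of variable $u=\mu(s)$; the $k=0$ summands are (locally) constant and are inspected separately, using ${}_{0^+}^{C}\beth^{\alpha}_{\mu}(\text{constant})=\Theta$. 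Having differentiated, I would re-index the delayed series: in the $j$-th term of $\sum_{j}A_j{\mathcal{X}^{\alpha,1}_\mu}(t,r_j)$ replace the summation multi-index $\vec{i}$ by $\vec{i}+e_j$ (raising the $j$-th component by one), which turns $\mu(r_j+h_{\vec{i}})$ into $\mu(h_{\vec{i}})$ at the cost of lowering the $j$-th argument of $Q$ by $r_j$; the extra index values thereby introduced are harmless, since $Q_{k+1}$ vanishes as soon as its $j$-th argument reaches $-r_j$.

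After this bookkeeping both ${}_{0^+}^{C}\beth^{\alpha}_{\mu}$ of the left side and $B{\mathcal{X}^{\alpha,1}_\mu}(t,0)+\sum_{j}F_j{\mathcal{X}^{\alpha,1}_\mu}(t,r_j)$ appear as one and the same double series in $k$ and $\vec{i}$ built from the common functions $\bigl[\mu(t)-\mu(h_{\vec{i}})\bigr]_+^{k\alpha}/\Gamma(k\alpha+1)$; equating the matrix coefficients, the asserted identity collapses to
\begin{equation*}
Q_{k+1}(i_1r_1,\dots,i_dr_d)-\sum_{j=1}^{d}A_jQ_{k+1}\!\left(\dots,(i_j-1)r_j,\dots\right)=BQ_k(i_1r_1,\dots,i_dr_d)+\sum_{j=1}^{d}F_jQ_k\!\left(\dots,(i_j-1)r_j,\dots\right),
\end{equation*}
which is precisely the defining recurrence \eqref{qkme} (read off after the relabelling $k\mapsto k+1$, so that \eqref{qkme} is invoked for $Q_{k+2}$). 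The Beta-integral evaluation of the fractional derivative of a truncated power is routine; the delicate points are the simultaneous shifts of the multi-indices so that the delayed arguments of the $Q$'s line up with \eqref{qkme}, the separate treatment of the $k=0$ and endpoint terms, and the justification of termwise differentiation through local uniform convergence. I expect the multi-index bookkeeping in the re-indexing step to be the main obstacle.
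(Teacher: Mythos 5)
Your proposal is correct and follows essentially the same route as the paper's proof: termwise application of the $\mu$-Caputo power rule to the defining series, the index shift $k\mapsto k+1$, the re-indexing of the delayed multi-indices $\vec{i}\mapsto\vec{i}+e_j$ (legitimate because $Q$ vanishes when an argument equals $-r_j$), and the identification of the resulting coefficients via the recurrence (\ref{qkme}). The only difference is organizational — the paper differentiates ${\mathcal{X}^{\alpha,1}_\mu}(t,0)$ alone, substitutes (\ref{qkme}) for $Q_{k+2}$ and then transfers the $A_j$-terms, recognized as $A_j\,{_{0^+}^{{C}}\beth^{\alpha}_{\mu}}{\mathcal{X}^{\alpha,1}_\mu}(t,r_j)$, to the left-hand side, while you differentiate the whole bracket and equate coefficients — which is the same computation.
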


\begin{proof}
   Firstly, we  compute the term ${_{0^+}^{{C}}\beth^{\alpha}_{\mu}}{\mathcal{X}^{\alpha,1}_\mu}\left( t,0 \right)$.
\begin{align}\label{ilk1}
&  {_{0^+}^{{C}}\beth^{\alpha}_{\mu}}{\mathcal{X}^{\alpha,1}_\mu}\left( t,0 \right)  \nonumber \\
  &= {\displaystyle\sum\limits_{k=1}^{\infty}}{\displaystyle\sum\limits_{i_1,i_2,\dots,i_d=0}^{\infty}}Q_{k+1}(i_1r_1,\dots, i_dr_d){_{0^+}^{{C}}\beth^{\alpha}_{\mu}}\left(
   \frac{\left[\mu(t)-\mu\left(\sum_{n=1}^{d}i_nr_n\right)\right]_+^{k\alpha}}{\Gamma(k\alpha+1)} \right) \nonumber \\
  & ={\displaystyle\sum\limits_{k=0}^{\infty}}{\displaystyle\sum\limits_{i_1,i_2,\dots,i_d=0}^{\infty}}Q_{k+2}(i_1r_1,\dots, i_dr_d)
   \frac{\left[\mu(t)-\mu\left(\sum_{n=1}^{d}i_nr_n\right)\right]_+^{k\alpha}}{\Gamma(k\alpha+1)}.
   \end{align}
One can obtain ${_{0^+}^{{C}}\beth^{\alpha}_{\mu}}{\mathcal{X}^{\alpha,1}_\mu}\left( t,r_j \right)$ as follows  by using the similar calculations.
  \begin{equation*}
    {_{0^+}^{{C}}\beth^{\alpha}_{\mu}}{\mathcal{P}^\mu_{\alpha,1}}\left( t,r_j \right):={\displaystyle\sum\limits_{k=0}^{\infty}}{\displaystyle\sum\limits_{i_1,i_2,\dots,i_d=0}^{\infty}}Q_{k+2}(i_1r_1,\dots, i_dr_d)
   \frac{\left[\mu(t)-\mu\left(r_j+\sum_{n=1}^{d}i_nr_n\right)\right]_+^{k\alpha}}{\Gamma(k\alpha+1)}.
  \end{equation*}
 By combining  (\ref{qkme}) with (\ref{ilk1}), we keep calculating
 \begin{align*}
& {_{0^+}^{{C}}\beth^{\alpha}_{\mu}}{\mathcal{X}^{\alpha,1}_\mu}\left( t,0 \right)\\
  & =B {\displaystyle\sum\limits_{k=0}^{\infty}}{\displaystyle\sum\limits_{i_1,i_2,\dots,i_d=0}^{\infty}}Q_{k+1}(i_1r_1,\dots, i_dr_d)
   \frac{\left[\mu(t)-\mu\left(\sum_{n=1}^{d}i_nr_n\right)\right]_+^{k\alpha}}{\Gamma(k\alpha+1)} \\
  & +\sum_{j=1}^{d}F_j {\displaystyle\sum\limits_{k=0}^{\infty}}{\displaystyle\sum\limits_{i_1,i_2,\dots,i_d=0}^{\infty}}Q_{k+1}(i_1r_1,\dots,i_jr_j -r_j,\dots,  i_dr_d)
   \frac{\left[\mu(t)-\mu\left(\sum_{n=1}^{d}i_nr_n\right)\right]_+^{k\alpha}}{\Gamma(k\alpha+1)} \\
  & +\sum_{j=1}^{d}A_j {\displaystyle\sum\limits_{k=0}^{\infty}}{\displaystyle\sum\limits_{i_1,i_2,\dots,i_d=0}^{\infty}}Q_{k+2}(i_1r_1,\dots,i_jr_j -r_j,\dots,  i_dr_d)
   \frac{\left[\mu(t)-\mu\left(\sum_{n=1}^{d}i_nr_n\right)\right]_+^{k\alpha}}{\Gamma(k\alpha+1)} \\
   & =B {\displaystyle\sum\limits_{k=0}^{\infty}}{\displaystyle\sum\limits_{i_1,i_2,\dots,i_d=0}^{\infty}}Q_{k+1}(i_1r_1,\dots,  i_dr_d)
   \frac{\left[\mu(t)-\mu\left(\sum_{n=1}^{d}i_nr_n\right)\right]_+^{k\alpha}}{\Gamma(k\alpha+1)} \\
  & +\sum_{j=1}^{d}F_j {\displaystyle\sum\limits_{k=0}^{\infty}}{\displaystyle\sum\limits_{i_1,i_2,\dots,i_d=0}^{\infty}}Q_{k+1}(i_1r_1,\dots,  i_dr_d)
   \frac{\left[\mu(t)-\mu\left(r_j+\sum_{n=1}^{d}i_nr_n\right)\right]_+^{k\alpha}}{\Gamma(k\alpha+1)} \\
  & +\sum_{j=1}^{d}A_j {\displaystyle\sum\limits_{k=0}^{\infty}}{\displaystyle\sum\limits_{i_1,i_2,\dots,i_d=0}^{\infty}}Q_{k+2}(i_1r_1,\dots,  i_dr_d)
   \frac{\left[\mu(t)-\mu\left(r_j+\sum_{n=1}^{d}i_nr_n\right)\right]_+^{k\alpha}}{\Gamma(k\alpha+1)} \\
   &= B {\mathcal{X}^{\alpha,1}_\mu} \left( t,0 \right)+  \sum_{j=1}^{d}F_{j}  {\mathcal{X}^{\alpha,1}_\mu}\left( t,r_j\right) +\sum_{k=1}^{d}A_j \left( {_{0^+}^{{C}}\beth^{\alpha}_{\mu}}{\mathcal{X}^{\alpha,1}_\mu}\left( t,r_j \right)\right).
\end{align*}
 which provides the craved result.

\end{proof}

\begin{corollary}
A solution of system (\ref{sorkok}) is
\begin{equation*}
    w\left(t  \right)=\left[ {\mathcal{X}^{\alpha,1}_\mu} \left( t,0 \right) -\sum_{j=1}^{d}     {\mathcal{X}^{\alpha,1}_\mu} \left( t, r_{j} \right)A_{j} \right]\varphi(0),
\end{equation*}
 provided that $ w\left(t  \right)=\varphi\left(t  \right)$ with $-r\leq t\leq0$.
\end{corollary}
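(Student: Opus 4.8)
The statement is the homogeneous ($\daleth\equiv\Theta$) reading of (\ref{sorkok}), so the plan is simply to substitute the candidate and verify the two requirements: that it solves the $\mu$-Caputo equation on $(0,T]$ and that it carries the prescribed history. I would set $W(t):={\mathcal{X}^{\alpha,1}_\mu}(t,0)-\sum_{j=1}^{d}{\mathcal{X}^{\alpha,1}_\mu}(t,r_j)A_j$, so that the candidate reads $w(t)=W(t)\varphi(0)$ for $t\ge 0$ and $w(t)=\varphi(t)$ for $-r\le t\le 0$, and observe that $\varphi(0)$ is a constant vector that passes through the linear operator ${_{0^+}^{{C}}\beth^{\alpha}_{\mu}}$.

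\emph{Step 1 (history and matching at $t=0$).} On $[-r,0)$ the identity $w=\varphi$ holds by definition, so the only thing to check is continuity at $t=0$, i.e. $w(0)=\varphi(0)$. Evaluating the series (\ref{mainm}) at $t=0$: since $\mu$ is increasing with $\mu'\neq 0$ and each $r_j>0$, one has $\mu\bigl(s+\sum_{j}i_jr_j\bigr)>\mu(0)$ as soon as $s>0$ or some $i_j\ge 1$, so each such summand is annihilated by $[\cdot]_+$; hence ${\mathcal{X}^{\alpha,1}_\mu}(0,r_j)=\Theta$, while in ${\mathcal{X}^{\alpha,1}_\mu}(0,0)$ only the $k=0$, $(i_1,\dots,i_d)=(0,\dots,0)$ term survives and equals $\mathcal{Q}_1=Q_1(0,\dots,0)=I$. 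Thus $w(0)=I\varphi(0)=\varphi(0)$, so the two branches agree and the prescribed history is respected.

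\emph{Step 2 (the fractional equation).} For $t\in(0,T]$ I would write the neutral combination as $w(t)-\sum_{i=1}^{d}A_iw(t-r_i)=W(t)\varphi(0)$, using that the shift $s\mapsto s+r_i$ built into the second slot of ${\mathcal{X}^{\alpha,1}_\mu}$ reproduces the $r_i$-delay of $w$, so the delayed contributions assemble into $\sum_{i}{\mathcal{X}^{\alpha,1}_\mu}(t,r_i)A_i\,\varphi(0)$ (with the $A_i$ placed as in the recursion (\ref{qkme})). Applying ${_{0^+}^{{C}}\beth^{\alpha}_{\mu}}$ and invoking Theorem~\ref{lemmaessential} (read with the $A_j$ on the matching side) gives ${_{0^+}^{{C}}\beth^{\alpha}_{\mu}}W(t)=B{\mathcal{X}^{\alpha,1}_\mu}(t,0)+\sum_{j=1}^{d}F_j{\mathcal{X}^{\alpha,1}_\mu}(t,r_j)$; multiplying on the right by the constant vector $\varphi(0)$ and re-identifying ${\mathcal{X}^{\alpha,1}_\mu}(t,r_j)\varphi(0)$ with $w(t-r_j)$ yields $Bw(t)+\sum_{j=1}^{d}F_jw(t-r_j)$, which is exactly the right-hand side of (\ref{sorkok}) with $\daleth\equiv\Theta$. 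That closes the verification.

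\emph{Where the effort lies.} Conceptually the corollary is Theorem~\ref{lemmaessential} evaluated on the vector $\varphi(0)$, so nothing new is needed; the two points I would write out with care are (i) the legitimacy of applying ${_{0^+}^{{C}}\beth^{\alpha}_{\mu}}$ term by term to the double series (\ref{mainm}), which requires the locally uniform convergence on $[0,T]$ of that series and of its formal $\mu$-Caputo derivative — supplied by the growth estimate on $\|\mathcal{Q}_{k+1}\|$ together with $\mu$ bounded on $[0,T]$, just as in the proof of Theorem~\ref{lemmaessential} — and (ii) the bookkeeping that identifies the delayed values $w(t-r_j)$ with the shifted building blocks ${\mathcal{X}^{\alpha,1}_\mu}(t,r_j)\varphi(0)$. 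The latter is the only genuinely delicate step: for a nonlinear $\mu$ the argument $\mu(t-r_j)$ and the shifted argument $\mu\bigl(r_j+\sum_n i_nr_n\bigr)$ appearing in (\ref{mainm}) are not interchangeable, so this identification must rest on the step-by-step structure of the construction rather than on a naive translation of the variable.
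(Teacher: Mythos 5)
Your Step 1 is precisely the paper's entire proof: the authors set $w(t)=\left[{\mathcal{X}^{\alpha,1}_\mu}(t,0)-\sum_{j=1}^{d}{\mathcal{X}^{\alpha,1}_\mu}(t,r_j)A_j\right]\kappa$ with an undetermined constant $\kappa$, observe ${\mathcal{X}^{\alpha,1}_\mu}(0,0)=I$ and ${\mathcal{X}^{\alpha,1}_\mu}(0,r_j)=\Theta$, and conclude $\kappa=\varphi(0)$; the satisfaction of the differential equation is not verified there at all, being left implicitly to Theorem~\ref{lemmaessential}. The trouble is your Step 2, where you try to supply that verification and the argument does not go through. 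First, the identity you start from, $w(t)-\sum_{i}A_iw(t-r_i)=W(t)\varphi(0)$, is self-contradictory: since $w(t)=W(t)\varphi(0)$ by definition, it would force $\sum_i A_i w(t-r_i)=0$. Second, the identification $w(t-r_j)={\mathcal{X}^{\alpha,1}_\mu}(t,r_j)\varphi(0)$, on which the rest of the step rests, is false in general: for $0<t<r_j$ the left-hand side is the arbitrary datum $\varphi(t-r_j)$, and for $t>r_j$ it equals $\left[{\mathcal{X}^{\alpha,1}_\mu}(t-r_j,0)-\sum_{m}{\mathcal{X}^{\alpha,1}_\mu}(t-r_j,r_m)A_m\right]\varphi(0)$, which differs from ${\mathcal{X}^{\alpha,1}_\mu}(t,r_j)\varphi(0)$ already when $\mu(t)=t$ (the subtracted sum does not vanish), while for nonlinear $\mu$ even the leading terms disagree --- a difficulty you yourself flag at the end but never resolve. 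Third, Theorem~\ref{lemmaessential} carries $A_j$ to the left of ${\mathcal{X}^{\alpha,1}_\mu}(t,r_j)$, whereas the corollary's formula carries $A_j$ on the right; with non-permutable matrices this cannot be fixed by ``reading the $A_j$ on the matching side'' --- that transfer is itself something to be proved.

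So the genuinely needed content --- absent from your proposal, and compressed to nothing in the paper's own two-line proof --- is a verification that the concrete function $t\mapsto W(t)\varphi(0)$, with its actual delayed values ($\varphi(t-r_i)$ for $t<r_i$, and the full bracket $W(t-r_i)\varphi(0)$ afterwards), satisfies (\ref{sorkok}) with $\daleth=0$. Theorem~\ref{lemmaessential} is a statement about shifting the second argument of ${\mathcal{X}^{\alpha,\beta}_\mu}$, not about time-translates of $w$, and the bridge between the two is exactly the step you label delicate and then omit. To repair Step 2 you would have to compute ${_{0^+}^{{C}}\beth^{\alpha}_{\mu}}\left[w(t)-\sum_i A_i w(t-r_i)\right]$ directly from the series (\ref{mainm}) and the recursion (\ref{qkme}), keeping each $A_j$ on the side the recursion dictates and treating separately the intervals where $t-r_i<0$, rather than appealing to Theorem~\ref{lemmaessential} as a black box. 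Your Step 1 (evaluation at $t=0$, only the $k=0$, $i_1=\dots=i_d=0$ term surviving) is correct and coincides with what the paper writes; everything beyond it needs to be redone.
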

\begin{proof}
 With the help of the variation of constants' technique, set
\begin{equation*}
    w\left(t  \right)=\left[ {\mathcal{X}^{\alpha,1}_\mu} \left( t,0 \right) -\sum_{j=1}^{d}     {\mathcal{X}^{\alpha,1}_\mu} \left( t, r_{j} \right)A_{j} \right]\kappa,
\end{equation*}
where a constant  $\kappa$ is unknown. It is clear that ${\mathcal{X}^{\alpha,1}_\mu} \left( 0,0 \right)=I$ and  ${\mathcal{X}^{\alpha,1}_\mu} \left(0, r_j \right)=\Theta$. As a result, $\kappa=\varphi(0)$.
\end{proof}

\begin{lemma}\label{temelremark2}
Let  ${\mathcal{X}^{\alpha,\beta}_\mu}\left( t,s \right) $ be as in (\ref{mainm}). The following mathematical equation is true:
\begin{align*}
 & \int_{0}^{t}\left( \mu(t)-\mu(s) \right)^{-\alpha}\int_{0}^{s}{\mathcal{X}^{\alpha,\alpha}_{\mu}}\left( s,x \right) \daleth\left(  x\right)d\mu(x)d\mu(s)\\
  &= {\displaystyle\sum\limits_{k=0}^{\infty}}{\displaystyle\sum\limits_{i_1,i_2,\dots,i_d=0}^{\infty}}Q_{k+1}(i_1r_1,\dots, i_dr_d) \int_{0}^{t}
   \frac{\Gamma(1-\alpha)\left[\mu(t)-\mu(x+\sum_{n=i}^{d}i_nr_n)\right]_+^{k\alpha}}{\Gamma(k\alpha+1)} \daleth\left(  x\right)d\mu(x).
\end{align*}
\end{lemma}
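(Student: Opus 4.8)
The plan is to substitute the series representation $(\ref{mainm})$ of $\mathcal{X}^{\alpha,\alpha}_\mu(s,x)$ into the left-hand side, interchange the double summation with the two integrations, apply Fubini's theorem to exchange the $s$- and $x$-integrals, and then recognise the resulting inner $s$-integral as an Euler Beta integral. Throughout I abbreviate $\rho:=\sum_{n=1}^{d}i_n r_n$ (which depends on the multi-index $(i_1,\dots,i_d)$).

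First I would insert, for $0\le x\le s\le t$,
\[
\mathcal{X}^{\alpha,\alpha}_\mu(s,x)=\sum_{k=0}^{\infty}\sum_{i_1,\dots,i_d=0}^{\infty}Q_{k+1}(i_1 r_1,\dots,i_d r_d)\,\frac{\bigl[\mu(s)-\mu(x+\rho)\bigr]_+^{(k+1)\alpha-1}}{\Gamma\bigl((k+1)\alpha\bigr)},
\]
and pull the series outside the integrals. This interchange is legitimate because on the compact set $[0,t]$ only finitely many delay multi-indices are active (the factor $[\mu(s)-\mu(x+\rho)]_+$ is identically zero on $[0,t]^2$ as soon as $\rho>t$, since $\mu$ is increasing), the norms $\|Q_{k+1}(i_1r_1,\dots,i_dr_d)\|$ can be bounded via the recursion $(\ref{qkme})$ so that the $k$-series converges, $\daleth$ is continuous, and $(\mu(t)-\mu(s))^{-\alpha}$ is $\mu$-integrable on $[0,t]$ because $\alpha\in(0,1)$; dominated convergence then applies.

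Next, for fixed $k$ and $(i_1,\dots,i_d)$ I would apply Fubini to
\[
\int_{0}^{t}\bigl(\mu(t)-\mu(s)\bigr)^{-\alpha}\int_{0}^{s}\frac{\bigl[\mu(s)-\mu(x+\rho)\bigr]_+^{(k+1)\alpha-1}}{\Gamma\bigl((k+1)\alpha\bigr)}\,\daleth(x)\,d\mu(x)\,d\mu(s).
\]
Its integrand is supported on $\{\,0\le x\le s\le t,\ \mu(x+\rho)\le\mu(s)\,\}$, which, $\mu$ being increasing, equals $\{\,x+\rho\le s\le t\,\}$; in particular the constraint $x\le s$ is automatic, so after swapping the order of integration one obtains
\[
\frac{1}{\Gamma\bigl((k+1)\alpha\bigr)}\int_{0}^{t}\daleth(x)\left(\int_{x+\rho}^{t}\bigl(\mu(t)-\mu(s)\bigr)^{-\alpha}\bigl(\mu(s)-\mu(x+\rho)\bigr)^{(k+1)\alpha-1}\,d\mu(s)\right)d\mu(x).
\]
The substitution $u=\mu(s)$ (valid since $\mu\in C^{1}$ is increasing with $\mu'\neq0$) turns the bracketed integral into $\int_{\mu(x+\rho)}^{\mu(t)}(\mu(t)-u)^{-\alpha}\bigl(u-\mu(x+\rho)\bigr)^{(k+1)\alpha-1}\,du$, which by the Beta integral equals $B\bigl(1-\alpha,(k+1)\alpha\bigr)\bigl(\mu(t)-\mu(x+\rho)\bigr)^{k\alpha}=\dfrac{\Gamma(1-\alpha)\,\Gamma\bigl((k+1)\alpha\bigr)}{\Gamma(k\alpha+1)}\bigl(\mu(t)-\mu(x+\rho)\bigr)^{k\alpha}$. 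The factor $\Gamma\bigl((k+1)\alpha\bigr)$ cancels; reinstating $[\cdot]_+$ to encode the condition $x+\rho\le t$ and summing over $k$ and over $(i_1,\dots,i_d)$ reproduces the right-hand side. (Equivalently, this final computation is the semigroup identity ${}_{0^{+}}^{RL}\gimel_{\mu}^{1-\alpha}\,{}_{0^{+}}^{RL}\gimel_{\mu}^{(k+1)\alpha}={}_{0^{+}}^{RL}\gimel_{\mu}^{k\alpha+1}$, up to the shift by $\rho$ and the constant $\Gamma(1-\alpha)$.)

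The only genuinely delicate point is the justification in the second paragraph: interchanging the infinite double series with the improper integral, the improperness coming from the $(\mu(t)-\mu(s))^{-\alpha}$ singularity at $s=t$. This rests on a uniform-in-$k$ norm bound for $Q_{k+1}$ and on the finiteness of the set of delay multi-indices active on $[0,t]$, after which dominated convergence using $\alpha\in(0,1)$ closes the argument. Fubini on a bounded region, the change of variables, and the Beta evaluation are all routine.
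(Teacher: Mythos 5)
Your argument is correct and is essentially the route the paper intends: it sketches the proof as "the expansion of $\mathcal{X}^{\alpha,\alpha}_{\mu}$ together with the substitution $v=\frac{\mu(s)-\mu(x+\sum_{n=1}^{d}i_nr_n)}{\mu(t)-\mu(x+\sum_{n=1}^{d}i_nr_n)}$", which is exactly your change of variables $u=\mu(s)$ followed by the affine rescaling that produces the Beta integral $B\bigl(1-\alpha,(k+1)\alpha\bigr)$. You merely make explicit the Fubini and series--integral interchange justifications that the paper leaves implicit.
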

\begin{proof}
 One can easily prove this theorem with the help of  a simple substitution $v=\frac{\mu(s)-\mu(x+\sum_{n=1}^{d}i_nr_n)}{\mu(t)-\mu(x+\sum_{n=1}^{d}i_nr_n)}$ together with the expansion of ${\mathcal{X}^{\alpha,\alpha}_{\mu}}$, so it is ignored.
\end{proof}

Now, the coming theorem is one of main theorems as to the desired solutions. It gives a part of the solution under the zero initial condition .

\begin{theorem}\label{thmessential}
The following function
\begin{equation*}
    w\left(t  \right)=\int_{0}^{t}{\mathcal{X}^{\alpha,\alpha}_{\mu}}\left(t,s  \right)\daleth\left(  s\right)d\mu(s), \ \ \ t\geq 0,
\end{equation*}
is a solution of system (\ref{sorkok}) under the condition $ w\left(t  \right)=0$ with $-r\leq t\leq0$.
\end{theorem}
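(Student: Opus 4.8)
The plan is to mimic the computation carried out in the proof of Theorem~\ref{lemmaessential}, now dragging the forcing term $\daleth$ through one extra convolution and reading off the inhomogeneity from the lowest--order term of the double series. First I would observe that the bracket $w(t)-\sum_{j=1}^{d}A_{j}w(t-r_{j})$ vanishes at $t=0$: indeed $w(0)=\int_{0}^{0}\cdots=0$, and for $0\le t\le r_{j}$ one has $t-r_{j}\in[-r,0]$, so $w(t-r_{j})=0$ by the zero initial condition. Hence, by the identity $\left({_{0^+}^{{C}}\beth^{\alpha}_{\mu}}\daleth\right)(t)={}_{0^{+}}^{RL}\beth_{\mu}^{\alpha}\!\left[\daleth(t)-\daleth(0)\right]$ recalled in Section~1, it suffices to evaluate the $\mu$--Riemann--Liouville derivative of that bracket, i.e.\ to apply $\left(\tfrac{1}{\mu'(t)}\tfrac{d}{dt}\right)$ to its $\mu$--Riemann--Liouville integral of order $1-\alpha$. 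Plugging the series~(\ref{mainm}) for ${\mathcal{X}^{\alpha,\alpha}_{\mu}}$ into $w(t)=\int_{0}^{t}{\mathcal{X}^{\alpha,\alpha}_{\mu}}(t,s)\daleth(s)d\mu(s)$ and applying ${}_{0^{+}}^{RL}\gimel_{\mu}^{1-\alpha}$, Lemma~\ref{temelremark2} gives
\[
{}_{0^{+}}^{RL}\gimel_{\mu}^{1-\alpha}w(t)=\sum_{k=0}^{\infty}\sum_{i_1,\dots,i_d=0}^{\infty}Q_{k+1}(i_1r_1,\dots,i_dr_d)\int_{0}^{t}\frac{\left[\mu(t)-\mu\!\left(x+\textstyle\sum_{n}i_nr_n\right)\right]_+^{k\alpha}}{\Gamma(k\alpha+1)}\daleth(x)d\mu(x),
\]
and the analogous expression, with $\left[\mu(t)-\mu(x+r_j+\sum_n i_nr_n)\right]_+$, for the shifted pieces making up $w(t-r_{j})$.

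Next I would differentiate term by term with $\tfrac{1}{\mu'(t)}\tfrac{d}{dt}$, legitimate by the local uniform convergence of the series (which is dominated by a matrix Mittag--Leffler series). For $k\ge 1$ the derivative of $\left[\mu(t)-\mu(\cdot)\right]_+^{k\alpha}$ produces $k\alpha\,\mu'(t)\left[\mu(t)-\mu(\cdot)\right]_+^{k\alpha-1}$; the $\mu'(t)$ cancels and, after re--indexing $k\mapsto k+1$, $Q_{k+1}$ becomes $Q_{k+2}$ and the exponent becomes $k\alpha+\alpha-1$. The sole $k=0$ summand is $Q_{1}(i_1r_1,\dots)\int_{0}^{t}\daleth(x)d\mu(x)$, which by $Q_{1}(0,\dots,0)=I$ and $Q_{1}(\cdot)=\Theta$ otherwise collapses to $\int_{0}^{t}\daleth(x)d\mu(x)$, whose $\tfrac{1}{\mu'(t)}\tfrac{d}{dt}$--derivative is exactly $\daleth(t)$. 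Therefore
\[
{_{0^+}^{{C}}\beth^{\alpha}_{\mu}}w(t)=\daleth(t)+\sum_{k=0}^{\infty}\sum_{i_1,\dots,i_d=0}^{\infty}Q_{k+2}(i_1r_1,\dots,i_dr_d)\int_{0}^{t}\frac{\left[\mu(t)-\mu\!\left(x+\textstyle\sum_{n}i_nr_n\right)\right]_+^{k\alpha+\alpha-1}}{\Gamma(k\alpha+\alpha)}\daleth(x)d\mu(x),
\]
while ${_{0^+}^{{C}}\beth^{\alpha}_{\mu}}\!\left[\sum_{j}A_{j}w(t-r_{j})\right]$ is the same kind of series with $Q_{k+2}(\dots,i_jr_j-r_j,\dots)$ and no extra $\daleth(t)$ term (the $k=0$ contribution there vanishes since $Q_{1}$ is supported at the origin only).

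Then I would collapse everything using the defining recurrence~(\ref{qkme}), exactly as at the end of the proof of Theorem~\ref{lemmaessential}: writing $Q_{k+2}=BQ_{k+1}+\sum_{j}F_{j}Q_{k+1}(\dots,i_jr_j-r_j,\dots)+\sum_{j}A_{j}Q_{k+2}(\dots,i_jr_j-r_j,\dots)$ inside the $w$--series, the $BQ_{k+1}$ part reassembles $B\int_{0}^{t}{\mathcal{X}^{\alpha,\alpha}_{\mu}}(t,x)\daleth(x)d\mu(x)=Bw(t)$; the $F_{j}$ part, after the shift $i_{j}\mapsto i_{j}+1$, reassembles $\sum_{j}F_{j}w(t-r_{j})$; and the $A_{j}$ part is precisely the series for ${_{0^+}^{{C}}\beth^{\alpha}_{\mu}}\!\left[\sum_{j}A_{j}w(t-r_{j})\right]$ from the previous step. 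Transposing the last term to the left gives
\[
{_{0^+}^{{C}}\beth^{\alpha}_{\mu}}\!\left[w(t)-\sum_{j=1}^{d}A_{j}w(t-r_{j})\right]=Bw(t)+\sum_{j=1}^{d}F_{j}w(t-r_{j})+\daleth(t),
\]
which is~(\ref{sorkok}); and $w(t)=0$ on $[-r,0]$ holds by construction, completing the argument.

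The main obstacle I expect is the bookkeeping of the re--indexed triple sums in the last step --- making sure the three pieces produced by~(\ref{qkme}) line up, respectively, with $w$, with $w(t-r_{j})$, and with ${_{0^+}^{{C}}\beth^{\alpha}_{\mu}}\!\left[\sum_{j}A_{j}w(t-r_{j})\right]$ --- together with rigorously justifying the term--by--term application of $\tfrac{1}{\mu'(t)}\tfrac{d}{dt}$ and the interchange of the double series with $\int_{0}^{t}(\cdot)\,d\mu$ needed to invoke Lemma~\ref{temelremark2}. A secondary point of care is the $\mu$--dependence: since $\left[\mu(t-r_{j})-\mu(\cdot)\right]_+$ and $\left[\mu(t)-\mu(\cdot+r_{j})\right]_+$ need not coincide when $\mu$ is nonlinear, the delayed term $w(t-r_{j})$ must be read consistently as the series obtained by shifting the second argument of ${\mathcal{X}^{\alpha,\alpha}_{\mu}}$ by $r_{j}$, in agreement with Theorem~\ref{lemmaessential} and its Corollary.
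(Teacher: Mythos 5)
Your proposal follows the paper's own proof essentially step for step: compute ${_{0^+}^{{C}}\beth^{\alpha}_{\mu}}\bigl(\int_0^t\mathcal{X}^{\alpha,\alpha}_{\mu}(t,x)\daleth(x)d\mu(x)\bigr)$ via Lemma \ref{temelremark2}, split off the $k=0$ summand to produce $\daleth(t)$, re-index $k\mapsto k+1$ to pass from $Q_{k+1}$ to $Q_{k+2}$, then use the recurrence (\ref{qkme}) and the shift $i_j\mapsto i_j+1$ to reassemble the $B$-, $F_j$- and $A_j$-pieces and transpose the $A_j$-terms. So in structure there is nothing new relative to the paper.

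However, the one step you try to justify explicitly is justified with an argument that does not work, and it is exactly the step the paper itself passes over with ``one can easily obtain''. You claim the shifted derivative ${_{0^+}^{{C}}\beth^{\alpha}_{\mu}}\bigl(\int_0^t\mathcal{X}^{\alpha,\alpha}_{\mu}(t,x+r_j)\daleth(x)d\mu(x)\bigr)$ has no zeroth-order $\daleth$-term ``since $Q_1$ is supported at the origin only''. But $Q_1(0,\dots,0)=I$, so the $k=0$, $i_1=\dots=i_d=0$ summand is present in the shifted series too; after applying ${}_{0^{+}}^{RL}\gimel_{\mu}^{1-\alpha}$ and $\tfrac{1}{\mu'(t)}\tfrac{d}{dt}$ it contributes $\tfrac{d}{d\mu(t)}\int_0^t\left[\mu(t)-\mu(x+r_j)\right]_+^{0}\daleth(x)d\mu(x)=\tfrac{\mu'(t-r_j)}{\mu'(t)}\,\daleth(t-r_j)$ for $t>r_j$, which is not zero in general — indeed in the unshifted computation this very summand is what produces $\daleth(t)$, so by the same token the shifted computation produces a $\daleth(t-r_j)$-type term rather than nothing. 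If that term is really there, it propagates through the $A_j$-part of the recurrence step and the final identity picks up an unwanted $-\sum_j A_j\tfrac{\mu'(t-r_j)}{\mu'(t)}\daleth(t-r_j)$ on the right-hand side; a complete proof must either show this contribution cancels or treat it explicitly, which neither your argument nor the paper's does. Together with the caveat you yourself raise at the end (for nonlinear $\mu$, $\int_0^t\mathcal{X}^{\alpha,\alpha}_{\mu}(t,x+r_j)\daleth(x)d\mu(x)$ is not literally $w(t-r_j)$, since $\mu(t)-\mu(x+r_j)\neq\mu(t-r_j)-\mu(x)$), these are the genuine gaps: your attempt faithfully reproduces the published computation but inherits, and in the first case mis-justifies, the two points that actually need proof.
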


\begin{proof}
  To see this, we consider the following expression by keeping Lemma \ref{temelremark2} in mind
  \begin{align}\label{ilk2}
   &{_{0^+}^{{C}}\beth^{\alpha}_{\mu}}\left( \int_{0}^{t}{\mathcal{X}^{\alpha,\alpha}_\mu}\left(t,x  \right)\daleth\left(  x\right)d\mu(x) \right)   \nonumber\\
   &=\frac{1}{\Gamma(1-\alpha)} \frac{d}{d\mu(t)} \int_{0}^{t}\left( \mu(t)-\mu(s) \right)^{-\alpha}\int_{0}^{s}{\mathcal{X}^{\alpha,\alpha}_{\mu}}\left( s,x \right) \daleth\left(  x\right)d\mu(x)d\mu(s)  \nonumber\\
     &  =   {\displaystyle\sum\limits_{k=0}^{\infty}}{\displaystyle\sum\limits_{i_1,i_2,\dots,i_d=0}^{\infty}}Q_{k+1}(i_1r_1,\dots, i_dr_d) \nonumber \\
  &\times \frac{d}{d\mu(t)}\int_{0}^{t}
   \frac{\left[\mu(t)-\mu(x+\sum_{n=1}^{d}i_nr_n)\right]_+^{k\alpha}}{\Gamma(k\alpha+1)} \daleth\left(  x\right)d\mu(x)  \nonumber\\
    &  =   {\displaystyle\sum\limits_{k=0}^{\infty}}{\displaystyle\sum\limits_{i_1,i_2,\dots,i_d=0}^{\infty}}Q_{k+1}(i_1r_1,\dots, i_dr_d) \nonumber \\
  &\times \frac{d}{d\mu(t)}\int_{0}^{t-\sum_{n=1}^{d}i_nr_n}
   \frac{\left[\mu(t)-\mu(x+\sum_{n=1}^{d}i_nr_n)\right]_+^{k\alpha}}{\Gamma(k\alpha+1)} \daleth\left(  x\right)d\mu(x) \nonumber  \nonumber\\
    &  =   {\displaystyle\sum\limits_{k=1}^{\infty}}{\displaystyle\sum\limits_{i_1,i_2,\dots,i_d=0}^{\infty}}Q_{k+1}(i_1r_1,\dots, i_dr_d) \nonumber \\
  &\times \frac{d}{d\mu(t)}\int_{0}^{t-\sum_{n=1}^{d}i_nr_n}
   \frac{\left[\mu(t)-\mu(x+\sum_{n=1}^{d}i_nr_n)\right]_+^{k\alpha}}{\Gamma(k\alpha+1)} \daleth\left(  x\right)d\mu(x)+\daleth\left(  t\right) \nonumber \\
    &  =   {\displaystyle\sum\limits_{k=0}^{\infty}}{\displaystyle\sum\limits_{i_1,i_2,\dots,i_d=0}^{\infty}}Q_{k+2}(i_1r_1,\dots, i_dr_d)\nonumber \\
  &\times \int_{0}^{t}
   \frac{\left[\mu(t)-\mu(x+\sum_{n=1}^{d}i_nr_n)\right]_+^{k\alpha+\alpha-1}}{\Gamma(k\alpha+\alpha)} \daleth\left(  x\right)d\mu(x)+\daleth\left(  t\right).
  \end{align}
  One can easily obtain
  \begin{align*}
  &{_{0^+}^{{C}}\beth^{\alpha}_{\mu}}\left( \int_{0}^{t}{\mathcal{X}^{\alpha,\alpha}_{\mu}}\left(t,x+r_j  \right)\daleth\left(  x\right)d\mu(x) \right)=  {\displaystyle\sum\limits_{k=0}^{\infty}}{\displaystyle\sum\limits_{i_1,i_2,\dots,i_d=0}^{\infty}}Q_{k+2}(i_1r_1,\dots, i_dr_d)  \\ &\times \int_{0}^{t}
   \frac{\left[\mu(t)-\mu(x+r_j+\sum_{n=1}^{d}i_nr_n)\right]_+^{k\alpha+\alpha-1}}{\Gamma(k\alpha+\alpha)} \daleth\left(  x\right)d\mu(x)
  \end{align*}
By having Lemma \ref{temelremark2} in mind, we combine (\ref{qkme}) with  (\ref{ilk2})  to obtain the following
\begin{align*}
  &{_{0^+}^{{C}}\beth^{\alpha}_{\mu}}\left( \int_{0}^{t}{\mathcal{X}^{\alpha,\alpha}_{\mu}}\left(t,x  \right)\daleth\left(  x\right)d\mu(x) \right) \\
  & = B {\displaystyle\sum\limits_{k=0}^{\infty}}{\displaystyle\sum\limits_{i_1,i_2,\dots,i_d=0}^{\infty}}Q_{k+1}(i_1r_1,\dots, i_dr_d) \\
 & \times \int_{0}^{t}
   \frac{\left[\mu(t)-\mu(x+\sum_{n=1}^{d}i_nr_n)\right]_+^{k\alpha+\alpha-1}}{\Gamma(k\alpha+\alpha)} \daleth\left(  x\right)d\mu(x)  +\daleth\left(  t\right)\\
   & + \sum_{j=1}^{d} F_j {\displaystyle\sum\limits_{k=0}^{\infty}}{\displaystyle\sum\limits_{i_1,i_2,\dots,i_d=0}^{\infty}}Q_{k+1}(i_1r_1,\dots,i_jr_j-r_j,\dots, i_dr_d)  \\
 & \times  \int_{0}^{t}
   \frac{\left[\mu(t)-\mu(x+\sum_{n=1}^{d}i_nr_n)\right]_+^{k\alpha+\alpha-1}}{\Gamma(k\alpha+\alpha)} \daleth\left(  x\right)d\mu(x)\\
   & + \sum_{j=1}^{d} A_j  {\displaystyle\sum\limits_{k=0}^{\infty}}{\displaystyle\sum\limits_{i_1,i_2,\dots,i_d=0}^{\infty}}Q_{k+2}(i_1r_1,\dots,i_jr_j-r_j,\dots, i_dr_d)   \\
 & \times \int_{0}^{t}
   \frac{\left[\mu(t)-\mu(x+\sum_{n=1}^{d}i_nr_n)\right]_+^{k\alpha+\alpha-1}}{\Gamma(k\alpha+\alpha)} \daleth\left(  x\right)d\mu(x) \\
   & = B{\displaystyle\sum\limits_{k=0}^{\infty}}{\displaystyle\sum\limits_{i_1,i_2,\dots,i_d=0}^{\infty}}Q_{k+1}(i_1r_1,\dots, i_dr_d)  \\
 & \times  \int_{0}^{t}
   \frac{\left[\mu(t)-\mu(x+\sum_{n=1}^{d}i_nr_n)\right]_+^{k\alpha+\alpha-1}}{\Gamma(k\alpha+\alpha)} \daleth\left(  x\right)d\mu(x)  +\daleth\left(  t\right)\\
   & + \sum_{j=1}^{d} F_j {\displaystyle\sum\limits_{k=0}^{\infty}}{\displaystyle\sum\limits_{i_1,i_2,\dots,i_d=0}^{\infty}}Q_{k+1}(i_1r_1,\dots, i_dr_d)  \\
 & \times  \int_{0}^{t}
   \frac{\left[\mu(t)-\mu(t+r_j+\sum_{n=1}^{d}i_nr_n)\right]_+^{k\alpha+\alpha-1}}{\Gamma(k\alpha+\alpha)} \daleth\left(  x\right)d\mu(x)\\
   & + \sum_{j=1}^{d} A_j  {\displaystyle\sum\limits_{k=0}^{\infty}}{\displaystyle\sum\limits_{i_1,i_2,\dots,i_d=0}^{\infty}}Q_{k+2}(i_1r_1,\dots, i_dr_d)  \\
 & \times  \int_{0}^{t}
   \frac{\left[\mu(t)-\mu(x+r_j+\sum_{n=1}^{d}i_nr_n)\right]_+^{k\alpha+\alpha-1}}{\Gamma(k\alpha+\alpha)} \daleth\left(  x\right)d\mu(x) \\
   &=B \int_{0}^{t}{\mathcal{X}^{\alpha,\alpha}_\mu}\left(t,x  \right)\daleth\left(  x\right)d\mu(x)  +    \sum_{j=1}^{d} F_j \int_{0}^{t}{\mathcal{X}^{\alpha,\alpha}_\mu}\left(t,x+j_k  \right)\daleth\left(  x\right)d\mu(x)  + \daleth\left(  t\right)   \\
         &+ \sum_{j=1}^{d} A_j \left[ {_{0^+}^{{C}}\beth^{\alpha}_{\mu}}\left( \int_{0}^{t}{\mathcal{X}^{\alpha,\alpha}_\mu}\left(t,x+r_j  \right)\daleth\left(  x\right)d\mu(x) \right)\right]\\
\end{align*}
which gives the inevitable result.
\end{proof}

The next theorem is the last one of main theorems as to a solution of the homogeneous part of system (\ref{sorkok}).

\begin{theorem}\label{thmessential2}
The following $\mathbb{R}^n$-valued continuous function
\begin{equation*}
    w\left(t  \right)=\sum_{j=1}^{d}\int_{-r_j}^{0} {\mathcal{X}^{\alpha,\alpha}_\mu}\left( t,r_j+s \right) \left[F_j\varphi\left( s \right)+A_j \left({_{0^+}^{{C}}\beth^{\alpha}_{\mu}}\varphi \right) \left( s \right) \right] d\mu(s)
\end{equation*}
 is a solution of system (\ref{sorkok})  with $ w\left(t  \right)=\varphi\left(t  \right)$, $-r\leq t\leq0$  and $\daleth=0$.
\end{theorem}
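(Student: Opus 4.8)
The plan is to argue exactly along the lines of the proofs of Theorem~\ref{lemmaessential} and Theorem~\ref{thmessential}: substitute the candidate $w$ into the neutral operator on the left of (\ref{sorkok}), evaluate the $\mu$-Caputo derivative by differentiating the defining series of $\mathcal{X}^{\alpha,\alpha}_\mu$ term by term, and then collapse the outcome using the matrix recurrence (\ref{qkme}). Throughout, the delayed values $w(t-r_i)=\varphi(t-r_i)$ for $t<r_i$ are the ones dictated by the stated initial condition $w\equiv\varphi$ on $[-r,0]$; this is consistent with the representation, since for $-r\le t<0$ every shifted argument $r_j+s$ with $s\in[-r_j,0]$ satisfies $r_j+s\ge 0$, which places each $\mathcal{X}^{\alpha,\alpha}_\mu(t,r_j+s)$ in the zero branch of (\ref{mainm}), so the formula does not interfere with the imposed history (cf.\ the same remark in the Corollary and in Theorem~\ref{thmessential}).

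Set $g_j(s):=F_j\varphi(s)+A_j\big({_{0^+}^{{C}}\beth^{\alpha}_{\mu}}\varphi\big)(s)$, so that for $t>0$ one has $w(t)=\sum_{j=1}^{d}\int_{-r_j}^{0}\mathcal{X}^{\alpha,\alpha}_\mu(t,r_j+s)\,g_j(s)\,d\mu(s)$. First I would compute ${_{0^+}^{{C}}\beth^{\alpha}_{\mu}}w(t)$: using ${_{0^+}^{{C}}\beth^{\alpha}_{\mu}}w(t)={_{0^+}^{{RL}}\beth^{\alpha}_{\mu}}[w(t)-w(0)]$ (the relation recorded at the close of the preliminaries, with $w(0)=0$), inserting the series for $\mathcal{X}^{\alpha,\alpha}_\mu$, and applying the shifted analogue of the substitution in Lemma~\ref{temelremark2} to push $\frac{d}{d\mu(t)}$ through the $\mu$-integral, I expect to obtain the same series as $w$ but with $Q_{k+1}$ advanced to $Q_{k+2}$ and exponent $k\alpha+\alpha-1$, together with a boundary contribution issuing from the $k=0$, all-indices-zero term (where $Q_1(0,\dots,0)=I$) that amounts, for $t\le r_j$, to $\sum_{j}g_j(t-r_j)$. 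Then I would substitute (\ref{qkme}) for $Q_{k+2}$, splitting the series into one $B$-block, the $d$ delayed $F_i$-blocks and the $d$ delayed $A_i$-blocks, and re-index each delayed block by $i_i\mapsto i_i+1$ so as to restore the original ranges. Re-reading the three blocks through the definition of $\mathcal{X}^{\alpha,\alpha}_\mu$ and through the identity for ${_{0^+}^{{C}}\beth^{\alpha}_{\mu}}w$ just derived, they become, respectively, $B\,w(t)$, $\sum_{i=1}^{d}F_i\,w(t-r_i)$ and $\sum_{i=1}^{d}A_i\,{_{0^+}^{{C}}\beth^{\alpha}_{\mu}}[w(t-r_i)]$, the last one precisely absorbing the boundary term once the history convention $w(t-r_i)=\varphi(t-r_i)$ is inserted. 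Transposing the $A_i$-block yields ${_{0^+}^{{C}}\beth^{\alpha}_{\mu}}[w(t)-\sum_{i}A_i w(t-r_i)]=B\,w(t)+\sum_{i}F_i w(t-r_i)$, which is (\ref{sorkok}) with $\daleth=0$; continuity of $w$ on $(0,T]$ follows from locally uniform convergence of the series.

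The step I expect to be the main obstacle is the bookkeeping hidden in the words ``re-index'' and ``boundary contribution.'' Because of the $[\cdot]_+$ truncation, the inner $\mu$-integrals carry an effective upper limit depending on both $t$ and the multi-index, so differentiating under the integral and then shifting $i_i\mapsto i_i+1$ generates boundary terms that one must show either cancel or reassemble into exactly the history quantities $F_j\varphi(t-r_j)$ and $A_j\big({_{0^+}^{{C}}\beth^{\alpha}_{\mu}}\varphi\big)(t-r_j)$---which is the whole reason $g_j$ is defined the way it is. Here one has to lean carefully on the degenerate data $Q_0=\Theta$, $Q_j(-r_1,\dots,s_d)=\dots=Q_j(s_1,\dots,-r_d)=\Theta$ and $Q_1(0,\dots,0)=I$ to discard the terms in which a shifted index would turn negative and to pin down the surviving ones. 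The subsidiary point---justifying the termwise $\mu$-differentiation and the interchange of the (now triple, after summing over $j$) series with the $\mu$-integrals---rests on the same Mittag--Leffler majorant bound used for the analogous matrix function in \cite{i5}, and can be quoted rather than redone.
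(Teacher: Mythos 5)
Your overall route is the one the paper itself follows: expand $\mathcal{X}^{\alpha,\alpha}_\mu$ via (\ref{mainm}), apply ${_{0^+}^{{C}}\beth^{\alpha}_{\mu}}$ termwise so that $Q_{k+1}$ advances to $Q_{k+2}$, insert the recurrence (\ref{qkme}), re-index the delayed blocks, and reassemble into $B$-, $F_i$- and $A_i$-blocks. Treating the two summands $F_j\varphi$ and $A_j\bigl({_{0^+}^{{C}}\beth^{\alpha}_{\mu}}\varphi\bigr)$ jointly through $g_j$, where the paper verifies only the $F_j\varphi$ integral and disposes of the rest by ``in a similar way'' plus superposition, is a cosmetic difference, not a different method.

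The genuine gap is that the step you defer --- ``the boundary contribution amounts, for $t\le r_j$, to $\sum_j g_j(t-r_j)$'' and ``the delayed blocks become $\sum_i F_i w(t-r_i)$ and $\sum_i A_i\,{_{0^+}^{{C}}\beth^{\alpha}_{\mu}}[w(\cdot-r_i)](t)$'' --- is precisely where the argument does not close for a general increasing $\mu$, because the kernel in (\ref{mainm}) is not translation compatible: $\mu(t)-\mu(x+r_j+r_i)\neq\mu(t-r_i)-\mu(x+r_j)$, so after re-indexing the $F_i$-block is $F_i\sum_j\int_{-r_j}^{0}\mathcal{X}^{\alpha,\alpha}_\mu(t,x+r_j+r_i)\,g_j(x)\,d\mu(x)$, which is not $F_i w(t-r_i)$; and a direct computation of the $k=0$, all-zero-index term (swap the order of integration and use the beta substitution as in Lemma \ref{temelremark2}) shows that for $0<t<r_j$ its $\mu$-Caputo derivative equals $g_j(t-r_j)\,\mu'(t-r_j)/\mu'(t)$, not $g_j(t-r_j)$, whereas what the equation actually needs on $(0,r_j)$ is $F_j\varphi(t-r_j)+A_j\,{_{0^+}^{{C}}\beth^{\alpha}_{\mu}}\bigl[\varphi(\cdot-r_j)\bigr](t)$, yet another object. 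A quick test case makes the mismatch concrete: take $d=1$, $B=A_1=\Theta$; on $(0,r_1)$ the equation with history $\varphi$ forces $w(t)=w(0)+\frac{1}{\Gamma(\alpha)}\int_0^t(\mu(t)-\mu(\sigma))^{\alpha-1}F_1\varphi(\sigma-r_1)\,\mu'(\sigma)\,d\sigma$, while the stated formula yields the same expression with $\mu'(\sigma-r_1)$ in place of $\mu'(\sigma)$; these coincide only when $\mu$ is affine (in particular $\mu(t)=t$, the setting of the special cases in \cite{i3,i5,1,6,m10}, where your plan does close). You also cannot lean on the paper for this bookkeeping: its displayed computation starts the inner sum at $k=1$, so the $Q_1=I$ boundary term never appears, and its final identities are left with shifted second arguments and are never converted into delayed values of $w$; so the reassembly you promise is exactly what must be supplied, and as described it would fail for non-affine $\mu$.
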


\begin{proof} Now we consider
  \begin{align}\label{ilk3}
   &  {_{0^+}^{{C}}\beth^{\alpha}_{\mu}}\left( \sum_{j=1}^{d}\int_{-r_j}^{0} {\mathcal{X}^{\alpha,\alpha}_\mu}\left( t,r_j+x \right) F_j\phi\left(x \right)d\mu(x)  \right) \nonumber \\
   & =  {_{0^+}^{{C}}\beth^{\alpha}_{\mu}} \biggl( \sum_{j=1}^{d}\int_{-r_j}^{0}  {\displaystyle\sum\limits_{k=1}^{\infty}}{\displaystyle\sum\limits_{i_1,i_2,\dots,i_d=0}^{\infty}}Q_{k+1}(i_1r_1,\dots, i_dr_d) \nonumber \\
 & \times  \frac{\left[\mu(t)-\mu(x+r_j+\sum_{n=1}^{d} i_nr_n)\right]_+^{k\alpha+\alpha-1}}{\Gamma(k\alpha+\alpha)}F_j\varphi\left( x \right)d\mu(x) \biggl) \nonumber\\
   & =  {_{0^+}^{{C}}\beth^{\alpha}_{\mu}} \biggl( \sum_{j=1}^{d}\int_{-r_j}^{0}  {\displaystyle\sum\limits_{k=0}^{\infty}}{\displaystyle\sum\limits_{i_1,i_2,\dots,i_d=0}^{\infty}}Q_{k+2}(i_1r_1,\dots, i_dr_d)  \nonumber \\
 & \times
   \frac{\left[\mu(t)-\mu(x+r_j+\sum_{n=1}^{d} i_nr_n)\right]_+^{k\alpha+2\alpha-1}}{\Gamma(k\alpha+2\alpha)} F_j\varphi\left( x \right)d\mu(x) \biggl)
  \end{align}
By applying (\ref{qkme}) to (\ref{ilk3}), we get
\begin{align*}
   &  {_{0^+}^{{C}}\beth^{\alpha}_{\mu}}\left( \sum_{j=1}^{d}\int_{-r_j}^{0} {\mathcal{X}^{\alpha,\alpha}_\mu}\left( t,x+r_j \right) F_j\varphi\left( x \right)d\mu(x)  \right) \\
   & = B\sum_{j=1}^{d}\int_{-r_j}^{0}  {\displaystyle\sum\limits_{k=0}^{\infty}}{\displaystyle\sum\limits_{i_1,i_2,\dots,i_d=0}^{\infty}}Q_{k+1}(i_1r_1,\dots, i_dr_d) \\
 & \times  {_{0^+}^{{C}}\beth^{\alpha}_{\mu}}\frac{\left[\mu(t)-\mu(t+r_j+\sum_{n=1}^{d} i_nr_n )\right]_+^{k\alpha+2\alpha-1}}{\Gamma(k\alpha+2\alpha)} F_j\varphi\left( x \right)d\mu(x) \\
   & +\sum_{m=1}^{d}C_m \sum_{j=1}^{d}\int_{-r_j}^{0}  {\displaystyle\sum\limits_{k=0}^{\infty}}{\displaystyle\sum\limits_{i_1,i_2,\dots,i_d=0}^{\infty}}Q_{k+1}(i_1r_1,\dots,i_mr_m-r_m,\dots, i_dr_d) \\
 & \times  {_{0^+}^{{C}}\beth^{\alpha}_{\mu}}\frac{\left[\mu(t)-\mu(x+r_j+\sum_{n=1}^{d} i_nr_n\right]_+^{k\alpha+2\alpha-1}}{\Gamma(k\alpha+2\alpha)} F_j\varphi\left( x \right)d\mu(x) \\
   &  +\sum_{m=1}^{d}A_m \biggl[{_{0^+}^{{C}}\beth^{\alpha}_{\mu}}\bigg( \sum_{j=1}^{d}\int_{-r_j}^{0}  {\displaystyle\sum\limits_{k=0}^{\infty}}{\displaystyle\sum\limits_{i_1,i_2,\dots,i_d=0}^{\infty}}Q_{k+2}(i_1r_1,\dots,i_mr_m-r_m,\dots, i_dr_d) \\
 & \times   \frac{\left[\mu(t)-\mu(x+r_j+\sum_{n=1}^{d} i_nr_n)\right]_+^{k\alpha+2\alpha-1}}{\Gamma(k\alpha+2\alpha)} F_j\varphi\left( x \right)d\mu(x)\bigg) \biggl] \\
   & = B\sum_{j=1}^{d}\int_{-r_j}^{0}  {\displaystyle\sum\limits_{k=0}^{\infty}}{\displaystyle\sum\limits_{i_1,i_2,\dots,i_d=0}^{\infty}}Q_{k+1}(i_1r_1,\dots, i_dr_d)  \\
 & \times
    \frac{\left[\mu(t)-\mu(x+r_j\sum_{n=1}^{d} i_nr_n)\right]_+^{k\alpha+\alpha-1}}{\Gamma(k\alpha+\alpha)} F_j\varphi\left( x \right)d\mu(x) \\
   & +\sum_{m=1}^{d}C_m \sum_{j=1}^{d}\int_{-r_j}^{0}  {\displaystyle\sum\limits_{k=0}^{\infty}}{\displaystyle\sum\limits_{i_1,i_2,\dots,i_d=0}^{\infty}}Q_{k+1}(i_1r_1,\dots, i_dr_d)  \\
 & \times
    \frac{\left[\mu(t)-\mu(x+r_j+r_m+\sum_{n=1}^{d} i_nr_n)\right]_+^{k\alpha+\alpha-1}}{\Gamma(k\alpha+\alpha)} F_j\varphi\left( x \right)d\mu(x) \\
   &  +\sum_{m=1}^{d}A_m \biggl[{_{0^+}^{{C}}\beth^{\alpha}_{\mu}}\bigg( \sum_{j=1}^{d}\int_{-r_j}^{0}  {\displaystyle\sum\limits_{k=0}^{\infty}}{\displaystyle\sum\limits_{i_1,i_2,\dots,i_d=0}^{\infty}}Q_{k+1}(i_1r_1,\dots, i_dr_d)   \\
 & \times
   \frac{\left[\mu(t)-\mu(x+r_j+r_m+\sum_{n=1}^{d} i_nr_n)\right]_+^{k\alpha+\alpha-1}}{\Gamma(k\alpha+\alpha)} F_j\varphi\left( x \right)d\mu(x)\bigg) \biggl] \\
   &= B \left( \sum_{j=1}^{d}\int_{-r_j}^{0} {\mathcal{X}^{\alpha,\alpha}_\mu}\left( t,x+r_j \right) F_j\varphi\left( x \right)d\mu(x)  \right) \\ & +\sum_{m=1}^{d}C_m\left( \sum_{j=1}^{d}\int_{-r_j}^{0} {\mathcal{P}_{\alpha,\alpha}}\left( t,x+r_j+r_m \right) F_j\varphi\left( x \right)d\mu(x)  \right) \\
   & +\sum_{m=1}^{d}A_m \left[{_{0^+}^{{C}}\beth^{\alpha}_{\mu}}\left( \sum_{j=1}^{d}\int_{-r_j}^{0} {\mathcal{X}{\alpha,\alpha}^\mu}\left( t, x+r_j+r_m \right) F_j\varphi\left( x \right)d\mu(x)  \right)\right].
\end{align*}
which provides $\sum_{j=1}^{d}\int_{-r_j}^{0} {\mathcal{X}^{\alpha,\alpha}_\mu}\left( t,x+r_j \right) A_j \left({_{0^+}^{{C}}\beth^{\alpha}_{\mu}}\varphi\right)\left( x \right)d\mu(x)$.  In a similar way, it can  be easily shown that $\sum_{j=1}^{d}\int_{-r_j}^{0} {\mathcal{X}^{\alpha,\alpha}_\mu}\left( t,x+r_j \right) F_j\varphi\left( x \right)d\mu(x)$ is a solution. Furthermore, summation of them is also  a solution due to superposition technique. This completes the proof.
\end{proof}

So far we have found the parts of the stepwise solution, now let's put the parts together in the below corollary which stands for a whole solution of system (\ref{sorkok}).

\begin{corollary}\label{soncor}
The following $\mathbb{R}^n$-valued continuous function
\begin{align*}
    w\left(t  \right)&=\left[ {\mathcal{X}^{\alpha,1}_\mu} \left( t,0 \right)- \sum_{m=1}^{d} {\mathcal{X}^{\alpha,1}_\mu} \left( t,r_m \right)A_{m}   \right]\varphi(0)+ \int_{0}^{t}{\mathcal{X}^{\alpha,\alpha}_{\mu}}\left(t,s  \right)\daleth\left(  s\right)d\mu(s) \\
    & +\sum_{m=1}^{d}\int_{-r_m}^{0} {\mathcal{X}^{\alpha,\alpha}_{\mu}}\left( t,s+r_m \right) \left[F_m\varphi\left( s \right)+A_m \left({_{0^+}^{{C}}\beth^{\alpha}_{\mu}}\varphi\right)\left( s \right) \right] d\mu(s)
\end{align*}
 is an exact analytical solution of system (\ref{sorkok}). In the light of Corollary \ref{soncor}, one can easily acquire  a mild Volterra type integral equation's solution formula of system (\ref{sorkok2}).
\end{corollary}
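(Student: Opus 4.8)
The plan is to obtain the formula by superposition of the three blocks already established, reading the displayed expression as the definition of $w$ on $[0,T]$ while the values on the history interval are kept equal to $\varphi$ (the same convention as in Theorems~\ref{thmessential} and~\ref{thmessential2}, where the formula is valid for $t\ge 0$ and $w$ equals $0$, resp.\ $\varphi$, on $[-r,0]$). Write $w=w^{(1)}+w^{(2)}+w^{(3)}$, where $w^{(1)}$ is the bracketed term applied to $\varphi(0)$, $w^{(2)}=\int_{0}^{t}{\mathcal{X}^{\alpha,\alpha}_{\mu}}(t,s)\daleth(s)\,d\mu(s)$, and $w^{(3)}$ is the remaining sum of integrals over $[-r_m,0]$. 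Let $L[u](t):={_{0^+}^{{C}}\beth^{\alpha}_{\mu}}\bigl[u(t)-\sum_{i=1}^{d}A_i u(t-r_i)\bigr]-Bu(t)-\sum_{i=1}^{d}F_i u(t-r_i)$ be the operator governing (\ref{sorkok}). Since ${_{0^+}^{{C}}\beth^{\alpha}_{\mu}}$ is linear, matrix multiplication is linear, and each shift $u\mapsto u(\cdot-r_i)$ is linear, $L$ is additive, so it suffices to evaluate $L$ on each $w^{(j)}$ separately, reading any delayed argument that falls in $[-r,0]$ against the prescribed history ($\varphi$ for $w^{(1)},w^{(3)}$ and $0$ for $w^{(2)}$), and to add the results.

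First I would apply Theorem~\ref{thmessential}: with vanishing history, $L[w^{(2)}]=\daleth$ on $(0,T]$ and $w^{(2)}\equiv 0$ on $[-r,0]$. Next, combining the Corollary following Theorem~\ref{lemmaessential} with Theorem~\ref{thmessential2}, the pair $w^{(1)}+w^{(3)}$ solves the homogeneous system, i.e.\ $L[w^{(1)}+w^{(3)}]=0$ on $(0,T]$, while reproducing the history $\varphi$ on $[-r,0]$: the integrals in $w^{(3)}$ absorb the contribution of $\varphi$ that enters through the delayed terms $F_i w(t-r_i)$ and $A_i w(t-r_i)$ (this is exactly the computation carried out in the proof of Theorem~\ref{thmessential2}), while $w^{(1)}$, built from ${\mathcal{X}^{\alpha,1}_\mu}(\cdot,0)$ and ${\mathcal{X}^{\alpha,1}_\mu}(\cdot,r_m)$, carries the initial value $\varphi(0)$ via Theorem~\ref{lemmaessential}. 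Adding the three pieces and using additivity of $L$ gives $L[w]=\daleth$ on $(0,T]$, which is precisely (\ref{sorkok}).

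It then remains to verify the initial condition and the continuity of $w$. On $[-r,0]$ we have $w=\varphi$ by construction. At $t=0$ one has ${\mathcal{X}^{\alpha,1}_\mu}(0,0)=I$, ${\mathcal{X}^{\alpha,1}_\mu}(0,r_m)=\Theta$, the integral defining $w^{(2)}$ is empty, and in $w^{(3)}$ the truncations $[\mu(0)-\mu(s+r_m+\sum_n i_n r_n)]_+$ vanish for $s\in(-r_m,0)$; hence $w(0^{+})=\varphi(0)$, which matches the history value, so $w$ is continuous at $0$. For $t\in(0,T]$ the continuity of $w$ is inherited from the continuity of ${\mathcal{X}^{\alpha,\beta}_{\mu}}$, of $\daleth$, of $\varphi$, of ${_{0^+}^{{C}}\beth^{\alpha}_{\mu}}\varphi$, and from the standard continuity of parameter-dependent integrals.

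The main obstacle is the point already needed in Theorems~\ref{lemmaessential},~\ref{thmessential} and~\ref{thmessential2} and simply inherited here: one must justify that the double series in (\ref{mainm}) converges absolutely and locally uniformly and may be $\mu$-Caputo differentiated term by term. This calls for an a priori norm estimate on $Q_{k+1}(i_1r_1,\dots,i_dr_d)$ read off from the recursion (\ref{qkme}); here it helps that, for a fixed $t$, the $[\cdot]_+$ truncation leaves only finitely many multi-indices $(i_1,\dots,i_d)$ with a non-zero contribution, so the inner sum is in fact finite and only the convergence in $k$ needs a genuine bound. Finally, for the semilinear system (\ref{sorkok2}) one replaces $\daleth(s)$ by $\daleth(s,w(s))$ inside $w^{(2)}$, which turns the representation of Corollary~\ref{soncor} into the mild, Volterra-type integral equation to which the fixed-point arguments of the following subsection are applied.
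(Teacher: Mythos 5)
Your proposal is correct and follows essentially the same route as the paper, which offers no separate proof for Corollary~\ref{soncor} but simply assembles it by superposition of the corollary following Theorem~\ref{lemmaessential}, Theorem~\ref{thmessential} and Theorem~\ref{thmessential2}, exactly as you do via the additivity of the operator $L$. Your extra verifications (matching of the history at $t=0$, continuity, and the term-by-term differentiability/convergence of the series defining ${\mathcal{X}^{\alpha,\beta}_\mu}$) go beyond what the paper records but are consistent with it.
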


\begin{remark}
Here are some special cases depending on selections of the coefficient matrices. For $\mu(x)=x$ and $y=0$
\begin{enumerate}
\item If  $A_{j}=\Theta$, $j=1,2, \dots, d$ and $F_{j}=\Theta$, $j=2, \dots, d$, then Corollary \ref{soncor} matches up with  Corollary $1$ in the reference\cite{i3}.
\item Corollary \ref{soncor} with $\daleth=0$ reduces to Theorem 3.2 in the work\cite{1} providing that $a_{j}=\Theta$, $j=1,2, \dots, d$ and $F_{j}=\Theta$, $j=2, \dots, d$, $B=\Theta$ .
\item Corollary \ref{soncor} overlap with Theorem 4.2 in the study\cite{i5} under the condition $A_{j}=\Theta$, $j=1,2, \dots, d$.
\item  Even if the constant coefficient matrices are commutative, our findings also are valid. If the coefficient matrices are permutable in addition to appropriate selections, Corollary  \ref{soncor} reduces to Theorem 6 in the paper\cite{6}.
\item Corollary \ref{soncor} reduces to Theorem 3.5 in the paper\cite{m10} on taking $d=2$ and without loss of generality $A_1=F_2=\Theta$.
\end{enumerate}

\end{remark}

\subsection{Existence and  uniqueness of solutions of neutral fractional multi-delayed differential equations' system}
 In this subsection, we look for answers to three kinds of questions : is there a solution for system (\ref{sorkok2})?, is the solution unique? Subsequent to given answers, we finis discussing. When we look at features of each term in system (\ref{sorkok2}) like $\daleth\left(  t,w\left(  t\right)\right)$ is continuous, we find an explicit solution in corollary \ref{soncor}. Unfortunately, these features or conditions are not enough to make the explicit solution unique. So, we add one more feature to the continuous function $\daleth\left(  t,w\left(  t\right)\right)$ in order to make the explicit solution  satisfy the uniqueness. This feature is that the continuous function $\daleth\left(  t,w\left(  t\right)\right)$ satisfies the Lipschitz condition in the second component with the Lipschitz constant $L_{\daleth}$, that is
 \begin{equation*}
   \left\Vert \daleth\left(  t,w\left(  t\right)\right)- \daleth\left(  t,u\left(  t\right)\right)  \right\Vert \leq  L_{\daleth}  \left\Vert w\left(  t\right)-u\left(  t\right) \right\Vert.
 \end{equation*}

Prior to carrying on, we discuss an inequality about ${\mathcal{X}^{\alpha,\beta}_\mu}$ in the following lemma.

\begin{lemma}\label{eql}
Let $\mathcal{X}^{\alpha,\beta}_\mu\left(  t,s\right)$ be as in (\ref{mainm}).
\begin{equation*}
  \int_{0}^{t} \left \Vert\mathcal{X}^{\alpha,\beta}_\mu \left(  t,s\right) \right\Vert d\mu(s) \leq \left(\mu(t) - \mu(0)\right)\left \Vert\mathcal{X}^{\alpha,\beta}_\mu \left(  t,0\right) \right\Vert
\end{equation*}
holds true.
\end{lemma}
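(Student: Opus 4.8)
The plan is to expand $\mathcal{X}^{\alpha,\beta}_\mu(t,s)$ by its defining series (\ref{mainm}) and estimate the $\mu$-integral term by term, reducing the whole thing to a single scalar integral identity. First I would write, for $s \in [0,t]$,
\begin{equation*}
\left\Vert \mathcal{X}^{\alpha,\beta}_\mu(t,s)\right\Vert \le \sum_{k=0}^{\infty}\sum_{i_1,\dots,i_d=0}^{\infty}\left\Vert \mathcal{Q}_{k+1}\right\Vert \frac{\left[\mu(t)-\mu\left(s+\sum_{j=1}^d i_j r_j\right)\right]_+^{k\alpha+\beta-1}}{\Gamma(k\alpha+\beta)},
\end{equation*}
and similarly for $s=0$, so that the claimed inequality will follow once I show, for each fixed $k$ and each fixed multi-index $(i_1,\dots,i_d)$, the scalar estimate
\begin{equation*}
\int_0^t \frac{\left[\mu(t)-\mu\left(s+\textstyle\sum_j i_j r_j\right)\right]_+^{k\alpha+\beta-1}}{\Gamma(k\alpha+\beta)}\,d\mu(s) \le \bigl(\mu(t)-\mu(0)\bigr)\,\frac{\left[\mu(t)-\mu\left(\textstyle\sum_j i_j r_j\right)\right]_+^{k\alpha+\beta-1}}{\Gamma(k\alpha+\beta)}.
\end{equation*}
Summing this over $k$ and the multi-index and using that $\mathcal{Q}_{k+1}$ is the same in both series then gives the result; the interchange of sum and integral is justified by nonnegativity (Tonelli).

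For the scalar estimate I would substitute $u=\mu(s)$ (legitimate since $\mu$ is increasing, $C^1$, with $\mu'\neq 0$), writing $c:=\mu\left(\sum_j i_j r_j\right)$ and $M:=\mu(t)$, so the left side becomes $\frac{1}{\Gamma(k\alpha+\beta)}\int_{\mu(0)}^{M}\left[M-u-c+\mu(0)\right]_+^{k\alpha+\beta-1}\,du$ after the appropriate shift — more carefully, $\mu(s+\sum_j i_j r_j)$ is \emph{not} $\mu(s)+c$ in general, so I must instead use monotonicity: since $\mu$ is increasing, $\mu\left(s+\sum_j i_j r_j\right)\ge \mu(s)+\mu\left(\sum_j i_j r_j\right)-\mu(0)$ need not hold either, but we do have $s\mapsto \mu\left(s+\sum_j i_j r_j\right)$ is increasing, hence $\mu(t)-\mu\left(s+\sum_j i_j r_j\right)$ is decreasing in $s$ and bounded above by its value at $s=0$, which is $\mu(t)-\mu\left(\sum_j i_j r_j\right)$. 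Therefore the integrand is pointwise bounded by $\left[\mu(t)-\mu\left(\sum_j i_j r_j\right)\right]_+^{k\alpha+\beta-1}/\Gamma(k\alpha+\beta)$ when $k\alpha+\beta-1\ge 0$, and integrating the constant bound over $s\in[0,t]$ against $d\mu(s)$ gives exactly $\left(\mu(t)-\mu(0)\right)$ times that constant. This is precisely the desired bound.

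The main obstacle is the regime $0<k\alpha+\beta-1<1$ (e.g. $k=0$, $\beta=\alpha<1$), where the integrand $\left[\mu(t)-\mu(s+\cdots)\right]_+^{k\alpha+\beta-1}$ is \emph{unbounded} as $s\uparrow t$ (when no delay shift is present) or as $\mu(s+\cdots)\uparrow\mu(t)$, so the crude pointwise bound fails. There I would argue differently: the function $s\mapsto \mu(t)-\mu\left(s+\sum_j i_j r_j\right)$ decreases from $\mu(t)-\mu\left(\sum_j i_j r_j\right)$ at $s=0$ down to a nonnegative value, so on the set where the bracket is positive, raising to a power $\gamma:=k\alpha+\beta-1\in(0,1)$ still yields a function of $s$ whose $d\mu$-integral I bound by comparison — using $a^\gamma \le a^\gamma$ trivially and $a \le \mu(t)-\mu(\sum_j i_j r_j)=:A$ gives $a^\gamma \le A^{\gamma-1} a$ only if $\gamma\ge 1$, so instead for $\gamma\in(0,1)$ I use $a^\gamma\le A^\gamma$ directly (since $0\le a\le A$), which again bounds the integrand by the constant $A^\gamma/\Gamma(k\alpha+\beta)$; integrating over $[0,t]$ against $d\mu$ contributes the factor $\mu(t)-\mu(0)$. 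So in fact the inequality $0\le a\le A \Rightarrow a^\gamma \le A^\gamma$ (valid for \emph{all} $\gamma\ge 0$) handles every case uniformly, and no delicate Beta-function computation is needed — the only genuine care is verifying measurability/integrability near the endpoint, which holds because $\gamma>-1$ makes the singularity integrable and the change of variables $u=\mu(s)$ is valid. Assembling the pieces and invoking Tonelli to push the $\mu$-integral inside the double series completes the proof.
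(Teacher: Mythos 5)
Your overall route is the same as the paper's (whose proof is a one-line sketch): expand $\mathcal{X}^{\alpha,\beta}_\mu(t,s)$ by (\ref{mainm}), use the monotonicity of $\mu$ to replace $\mu(t)-\mu\left(s+\sum_{j}i_jr_j\right)$ by its value at $s=0$, and integrate the constant bound in $d\mu(s)$. However, your handling of the exponent $\gamma:=k\alpha+\beta-1$ contains a genuine gap. The troublesome terms are not those with $0<\gamma<1$ (there the integrand is bounded, since it tends to $0$ as the bracket does, and $0\le a\le A\Rightarrow a^{\gamma}\le A^{\gamma}$ works); they are the terms with $\gamma<0$, which occur for $k=0$ whenever $\beta<1$ --- in particular for $\beta=\alpha$, i.e.\ exactly the kernel $\mathcal{X}^{\alpha,\alpha}_\mu$ to which Lemma \ref{eql} is applied in Theorems \ref{eu1} and \ref{s}. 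Your own example ($k=0$, $\beta=\alpha<1$) gives $\gamma=\alpha-1<0$, not a value in $(0,1)$. For $\gamma<0$ the map $a\mapsto a^{\gamma}$ is decreasing, so the pointwise bound reverses ($a^{\gamma}\ge A^{\gamma}$), and your concluding claim that the inequality $a^{\gamma}\le A^{\gamma}$ for all $\gamma\ge0$ handles every case uniformly silently excludes precisely these terms. The failure is not merely one of method: for the $k=0$, $i_1=\dots=i_d=0$ term one has $\int_0^t\left(\mu(t)-\mu(s)\right)^{\beta-1}d\mu(s)=\left(\mu(t)-\mu(0)\right)^{\beta}/\beta$, which strictly exceeds $\left(\mu(t)-\mu(0)\right)\cdot\left(\mu(t)-\mu(0)\right)^{\beta-1}$ when $\beta\in(0,1)$, so the term-by-term estimate to which you reduce the lemma is false for that term (and if all coefficient matrices vanish this is the only surviving term, so no resummation can rescue the argument); the singular $k=0$ contribution must be integrated exactly, which trades $1/\Gamma(\beta)$ for $1/\Gamma(\beta+1)$ and forces either a hypothesis on $\beta$ or a modified right-hand side.

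A second, smaller defect: after the triangle inequality your chain terminates in $\left(\mu(t)-\mu(0)\right)\sum_{k}\sum_{i_1,\dots,i_d}\left\Vert\mathcal{Q}_{k+1}\right\Vert\left[\mu(t)-\mu\left(\sum_j i_jr_j\right)\right]_+^{\gamma}/\Gamma(k\alpha+\beta)$, and this norm-majorant series dominates $\left\Vert\mathcal{X}^{\alpha,\beta}_\mu(t,0)\right\Vert$ rather than being dominated by it; so even on the nonnegative-exponent terms your argument proves a bound by the majorant series at $s=0$, not literally by $\left(\mu(t)-\mu(0)\right)\left\Vert\mathcal{X}^{\alpha,\beta}_\mu(t,0)\right\Vert$ as stated. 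The paper's omitted proof rests on the same monotonicity step and is open to the same objections, but since your proposal spells the steps out, you would need to either restrict to $\beta\ge1$, treat the $\gamma<0$ terms separately, or restate the conclusion with the majorant on the right-hand side.
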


\begin{proof}
 In order to see whether the above inequality is true, it is sufficient to use   $\left[\mu(t)-\mu\left(s+ \sum_{j=1}^{d}  i_jr_j\right)\right] \leq \left[\mu(t)-\mu\left( \sum_{j=1}^{d}  i_jr_j\right)\right]$ along with the expansion $\mathcal{X}^{\alpha,\beta}_\mu \left(  t,s\right)$. So, it is omitted.
\end{proof}

The following theorem is about  existence and uniqueness of system (\ref{sorkok2}).
\begin{theorem}\label{eu1}
  If the continuous function $\daleth\left(  t,w\left(  t\right)\right)$ satisfies the Lipschitz condition in the second component with the Lipschitz constant $L_{\daleth}$ with $    L_{\daleth} \left(\mu(T) - \mu(0)\right) \left \Vert\mathcal{X}^{\alpha,\alpha}_\mu \left(  T,0\right) \right\Vert <1$, then the integral equation in the corollary \ref{soncor} is of a unique solution in  $\left[-d, T  \right]$.
\end{theorem}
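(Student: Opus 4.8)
The plan is to set up a standard Banach fixed-point (contraction mapping) argument on the space $C([-d,T],\mathbb{R}^n)$ equipped with the supremum norm $\|\cdot\|_C$. From Corollary \ref{soncor} and the remark following it, system (\ref{sorkok2}) is equivalent to the integral equation
\begin{align*}
w(t) &= \left[ {\mathcal{X}^{\alpha,1}_\mu}(t,0)- \sum_{m=1}^{d} {\mathcal{X}^{\alpha,1}_\mu}(t,r_m)A_{m} \right]\varphi(0) + \int_{0}^{t}{\mathcal{X}^{\alpha,\alpha}_{\mu}}(t,s)\,\daleth(s,w(s))\,d\mu(s)\\
&\quad +\sum_{m=1}^{d}\int_{-r_m}^{0} {\mathcal{X}^{\alpha,\alpha}_{\mu}}(t,s+r_m)\left[F_m\varphi(s)+A_m\left({_{0^+}^{{C}}\beth^{\alpha}_{\mu}}\varphi\right)(s)\right]d\mu(s)
\end{align*}
for $t\in[0,T]$, together with $w(t)=\varphi(t)$ on $[-r,0]$. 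Define the operator $\mathcal{T}$ on $C([-d,T],\mathbb{R}^n)$ (restricted to the set of functions agreeing with $\varphi$ on $[-r,0]$) by letting $(\mathcal{T}w)(t)$ be the right-hand side above for $t\geq 0$ and $(\mathcal{T}w)(t)=\varphi(t)$ for $t<0$. A fixed point of $\mathcal{T}$ is exactly a solution of (\ref{sorkok2}).

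\textbf{The key steps, in order.} First I would check that $\mathcal{T}$ maps $C([-d,T],\mathbb{R}^n)$ into itself: continuity of $t\mapsto(\mathcal{T}w)(t)$ follows from continuity of ${\mathcal{X}^{\alpha,\alpha}_\mu}$ and ${\mathcal{X}^{\alpha,1}_\mu}$, continuity of $\daleth$, and the fact that convolution-type integrals of continuous functions are continuous; matching with $\varphi$ at $t=0$ is immediate from ${\mathcal{X}^{\alpha,1}_\mu}(0,0)=I$ and ${\mathcal{X}^{\alpha,1}_\mu}(0,r_m)=\Theta$. Second, and this is the heart of the argument, I would estimate $\|\mathcal{T}w-\mathcal{T}u\|_C$ for two functions $w,u$ agreeing with $\varphi$ on $[-r,0]$. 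Only the middle term depends on $w$, so
\begin{equation*}
\left\|(\mathcal{T}w)(t)-(\mathcal{T}u)(t)\right\| \leq \int_{0}^{t}\left\|{\mathcal{X}^{\alpha,\alpha}_{\mu}}(t,s)\right\|\,\left\|\daleth(s,w(s))-\daleth(s,u(s))\right\|d\mu(s).
\end{equation*}
Applying the Lipschitz condition gives the factor $L_{\daleth}\|w(s)-u(s)\|\leq L_{\daleth}\|w-u\|_C$, and then Lemma \ref{eql} bounds $\int_0^t\|{\mathcal{X}^{\alpha,\alpha}_\mu}(t,s)\|\,d\mu(s)$ by $(\mu(t)-\mu(0))\|{\mathcal{X}^{\alpha,\alpha}_\mu}(t,0)\|$. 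Since $\mu$ is increasing and $\|{\mathcal{X}^{\alpha,\alpha}_\mu}(\cdot,0)\|$ is monotone-dominated on $[0,T]$, one bounds this uniformly in $t$ by $(\mu(T)-\mu(0))\|{\mathcal{X}^{\alpha,\alpha}_\mu}(T,0)\|$ — here one should be a little careful and either argue monotonicity of $t\mapsto(\mu(t)-\mu(0))\|{\mathcal{X}^{\alpha,\alpha}_\mu}(t,0)\|$ directly from the series (\ref{mainm}), or take the supremum over $[0,T]$ of this continuous function, which is what the hypothesis is implicitly invoking. Hence
\begin{equation*}
\|\mathcal{T}w-\mathcal{T}u\|_C \leq L_{\daleth}\left(\mu(T)-\mu(0)\right)\left\|{\mathcal{X}^{\alpha,\alpha}_\mu}(T,0)\right\|\,\|w-u\|_C,
\end{equation*}
and the hypothesis $L_{\daleth}(\mu(T)-\mu(0))\|{\mathcal{X}^{\alpha,\alpha}_\mu}(T,0)\|<1$ makes $\mathcal{T}$ a contraction. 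Third, I would invoke the Banach fixed point theorem on the complete metric space $\{w\in C([-d,T],\mathbb{R}^n): w|_{[-r,0]}=\varphi\}$ (closed subset of a Banach space, hence complete) to conclude existence of a unique fixed point, i.e. a unique solution of (\ref{sorkok2}) on $[-d,T]$.

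\textbf{The main obstacle} I anticipate is the passage from the pointwise bound $\int_0^t\|{\mathcal{X}^{\alpha,\alpha}_\mu}(t,s)\|d\mu(s)\leq(\mu(t)-\mu(0))\|{\mathcal{X}^{\alpha,\alpha}_\mu}(t,0)\|$ to a bound uniform in $t\in[0,T]$ by the value at $t=T$; this requires either a monotonicity argument for $t\mapsto(\mu(t)-\mu(0))\|{\mathcal{X}^{\alpha,\alpha}_\mu}(t,0)\|$ on $[0,T]$ (which can be read off the defining series, since each summand's numerator $[\mu(t)-\mu(\sum i_j r_j)]_+^{k\alpha+\alpha-1}$ together with the prefactor $\mu(t)-\mu(0)$ is nondecreasing in $t$ when $k\alpha+\alpha-1\geq 0$ — and a separate elementary check when $\alpha<1$ so that $k=0$ gives exponent $\alpha-1<0$, where one uses $[\mu(t)-\mu(0)]^\alpha$ nondecreasing), or simply replacing $\|{\mathcal{X}^{\alpha,\alpha}_\mu}(T,0)\|$ in the statement by $\sup_{t\in[0,T]}\|{\mathcal{X}^{\alpha,\alpha}_\mu}(t,0)\|$, which is finite by continuity. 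Everything else is routine: the $w$-independence of the first and third terms, the Lipschitz estimate, and the completeness of the function space are all standard.
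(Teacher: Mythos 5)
Your proposal follows essentially the same route as the paper: define the solution operator from Corollary \ref{soncor} with $\daleth(s,w(s))$ in the integrand, bound the difference of images using the Lipschitz condition and Lemma \ref{eql}, and invoke the Banach contraction principle under the hypothesis $L_{\daleth}\left(\mu(T)-\mu(0)\right)\left\Vert\mathcal{X}^{\alpha,\alpha}_\mu\left(T,0\right)\right\Vert<1$. In fact you are somewhat more careful than the paper, which passes from the pointwise bound at $t$ to the constant at $T$ without comment and states the intermediate estimate as an equality, whereas you flag and justify the needed uniformity in $t$.
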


\begin{proof}
  Define $\mathcal{G}: C\left( \left[-r,T  \right], \mathbb{R}^n \right) \rightarrow C\left( \left[-r,T  \right], \mathbb{R}^n \right) $ by
\begin{align*}
  \mathcal{G} w\left(t  \right)&=\left[ {\mathcal{X}^{\alpha,1}_\mu} \left( t,0 \right)- \sum_{m=1}^{d} {\mathcal{X}^{\alpha,1}_\mu} \left( t,r_m \right)A_{m}   \right]\varphi(0) + \int_{0}^{t}{\mathcal{X}^{\alpha,\alpha}_{\mu}}\left(t,x  \right)\daleth\left(  x,w(x)\right)d\mu(x) \\
    & +\sum_{m=1}^{d}\int_{-r_m}^{0} {\mathcal{X}^{\alpha,\alpha}_{\mu}}\left( t,x+r_m \right) \left[F_m\varphi\left( x \right)+A_m \left({_{0^+}^{{C}}\beth^{\alpha}_{\mu}}\varphi\right)\left( x \right) \right] d\mu(x)
\end{align*}
For arbitrary $w,u \in C\left( \left[-r,T  \right], \mathbb{R}^n \right) $, we consider by using Lemma \ref{eql}
\begin{align*}
 \left\Vert  \mathcal{G}w\left(  t\right) - \mathcal{G}u\left(  t\right) \right\Vert  & \leq  \int_{0}^{t} \left\Vert \mathcal{X}^{\alpha,\alpha}_\mu\left(  t,s\right)  \right\Vert \left\Vert  \daleth\left(  s ,  w\left(s  \right)\right)- \daleth\left(  s ,  u\left(s  \right)\right) \right\Vert ds \\
  &= L_{\daleth}\left(\mu(T) - \mu(0)\right) \left \Vert\mathcal{X}^{\alpha,\alpha}_\mu \left(  T,0\right) \right\Vert \left\Vert  w-u \right\Vert_{C}.
\end{align*}
The statements of this theorem ensure that $\mathcal{G}$ is a contraction. By the Banach Contraction principle,  $\mathcal{G}$ is of a unique fixed point on  $\left[ -r, T \right]$, that is $\exists ! w_0 \in  C\left( \left[-r,T  \right], \mathbb{R}^n \right) $, $w_0\left(  t\right)=\mathcal{G}w_0\left(  t\right)$.
\end{proof}

\subsection{Stability of neutral fractional order multi-delayed differential system in the sense of Ulam-Hyers}
We investigate the stability of system (\ref{sorkok2}).

\begin{definition}
Let $\varepsilon>0$. The system (\ref{sorkok2}) is said to be Ulam-Hyers stable if for every solution $w \in C\left( \left[0,T  \right], \mathbb{R}^n \right)$ of inequality,
\begin{equation}\label{sd}
  \left\Vert  {_{0^+}^{{C}}\beth^{\alpha}_{\mu}}   \left[ w \left( t\right)- \sum_{i=1}^{d} A_{i} w \left( t-r_{i}\right) \right]  -Bw\left( t\right)  -\sum_{i=1}^{d}F_{i}w\left(t-r_{i}\right)  -\daleth\left(  t,w(t)\right)   \right\Vert \leq \varepsilon,
\end{equation}
there exists a solution $w_0 \in C\left( \left[0,T  \right], \mathbb{R}^n \right)$ of system (\ref{sorkok2}), and $\eta>0$ such that
\begin{equation*}
   \left\Vert    w \left( t\right)- w_0 \left( t\right) \right\Vert \leq \eta . \varepsilon, \ \ t \in \left[ 0,T \right].
\end{equation*}
\end{definition}

\begin{remark}\label{remarkkk}
A function $w \in C^1\left( \left[0,T  \right], \mathbb{R}^n \right)$ is a solution of the inequality equation (\ref{sd}) if and only if there exists a function $z \in C^1\left( \left[0,T  \right], \mathbb{R}^n \right)$, such that $\left\Vert   z \left( t\right) \right\Vert < \varepsilon$ and ${_{0^+}^{{C}}\beth^{\alpha}_{\mu}}   \left[ w \left( t\right)- \sum_{i=1}^{d} A_{i} w \left( t-r_{i}\right) \right]  =Bw\left( t\right)  +\sum_{i=1}^{d}F_{i}w\left(t-r_{i}\right)  +\daleth\left(  t,w(t)\right) +z \left( t\right)$.
\end{remark}

\begin{theorem}\label{s}
  Suppose that all of the assumptions of Theorem \ref{eu1} are hold. Then system (\ref{sorkok2}) is Ulam-Hyers stable.
\end{theorem}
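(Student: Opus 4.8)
\textbf{Proof plan for Theorem \ref{s} (Ulam--Hyers stability).}
The plan is to reduce the stability claim to a single application of the contraction estimate already established in Theorem \ref{eu1}. First I would take an arbitrary solution $w \in C^1\left(\left[0,T\right],\mathbb{R}^n\right)$ of the inequality (\ref{sd}). By Remark \ref{remarkkk}, there exists a perturbation $z \in C^1\left(\left[0,T\right],\mathbb{R}^n\right)$ with $\left\Vert z(t)\right\Vert < \varepsilon$ on $\left[0,T\right]$ such that $w$ solves exactly the perturbed system
\begin{equation*}
  {_{0^+}^{{C}}\beth^{\alpha}_{\mu}}\left[ w\left(t\right)- \sum_{i=1}^{d} A_{i} w\left(t-r_{i}\right)\right] = Bw\left(t\right) + \sum_{i=1}^{d}F_{i}w\left(t-r_{i}\right) + \daleth\left(t,w(t)\right) + z\left(t\right),
\end{equation*}
with the same initial data $w(t)=\varphi(t)$ on $\left[-r,0\right]$. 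Applying Corollary \ref{soncor} with the inhomogeneous term $\daleth\left(t,w(t)\right)+z(t)$ in place of $\daleth\left(t,w(t)\right)$, the function $w$ satisfies the integral identity $w(t) = \mathcal{G}w(t) + \int_{0}^{t}{\mathcal{X}^{\alpha,\alpha}_{\mu}}\left(t,x\right)z(x)\,d\mu(x)$, where $\mathcal{G}$ is the operator from the proof of Theorem \ref{eu1}.

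Next, let $w_0 \in C\left(\left[0,T\right],\mathbb{R}^n\right)$ be the unique fixed point of $\mathcal{G}$ guaranteed by Theorem \ref{eu1}, i.e.\ the exact solution of (\ref{sorkok2}) with the same initial data, so $w_0 = \mathcal{G}w_0$. Then I would estimate, for $t \in \left[0,T\right]$,
\begin{align*}
  \left\Vert w(t)-w_0(t)\right\Vert
  &\leq \left\Vert \mathcal{G}w(t) - \mathcal{G}w_0(t)\right\Vert + \int_{0}^{t}\left\Vert {\mathcal{X}^{\alpha,\alpha}_{\mu}}\left(t,x\right)\right\Vert \left\Vert z(x)\right\Vert d\mu(x) \\
  &\leq L_{\daleth}\left(\mu(T)-\mu(0)\right)\left\Vert \mathcal{X}^{\alpha,\alpha}_\mu\left(T,0\right)\right\Vert \left\Vert w-w_0\right\Vert_{C} + \varepsilon\left(\mu(T)-\mu(0)\right)\left\Vert \mathcal{X}^{\alpha,\alpha}_\mu\left(T,0\right)\right\Vert,
\end{align*}
where the first term uses exactly the Lipschitz-plus-Lemma \ref{eql} bound from the proof of Theorem \ref{eu1} and the second uses Lemma \ref{eql} together with $\left\Vert z(x)\right\Vert<\varepsilon$. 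Taking the supremum over $t\in\left[0,T\right]$ on the left and writing $M:=\left(\mu(T)-\mu(0)\right)\left\Vert \mathcal{X}^{\alpha,\alpha}_\mu\left(T,0\right)\right\Vert$, this gives $\left\Vert w-w_0\right\Vert_C \leq L_{\daleth}M\left\Vert w-w_0\right\Vert_C + \varepsilon M$, hence
\begin{equation*}
  \left\Vert w-w_0\right\Vert_{C} \leq \frac{M}{1-L_{\daleth}M}\,\varepsilon,
\end{equation*}
which is admissible precisely because the hypothesis of Theorem \ref{eu1} gives $L_{\daleth}M<1$. Setting $\eta:=\dfrac{M}{1-L_{\daleth}M}>0$ yields $\left\Vert w(t)-w_0(t)\right\Vert \leq \eta\varepsilon$ for all $t\in\left[0,T\right]$, which is the definition of Ulam--Hyers stability.

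The only genuinely delicate point is the passage from the differential inequality (\ref{sd}) to the integral representation of $w$: one must be sure that Corollary \ref{soncor} applies verbatim with the continuous right-hand side $\daleth\left(t,w(t)\right)+z(t)$ (continuity of $z$ is exactly what Remark \ref{remarkkk} provides), and that subtracting the two integral identities isolates the $\mathcal{G}$-difference plus the clean remainder term $\int_0^t {\mathcal{X}^{\alpha,\alpha}_{\mu}}(t,x)z(x)\,d\mu(x)$ with no leftover boundary contributions, which holds because $w$ and $w_0$ share the same initial function $\varphi$ so all the $\varphi$-dependent terms in Corollary \ref{soncor} cancel. Everything after that is the routine Gronwall-free absorption argument above; no new estimate beyond Lemma \ref{eql} and the contraction constant from Theorem \ref{eu1} is required.
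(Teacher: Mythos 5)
Your proposal is correct and follows essentially the same route as the paper's own proof: Remark \ref{remarkkk} to convert the inequality into an exactly perturbed equation, Corollary \ref{soncor} to write $w=\mathcal{G}w+\int_0^t \mathcal{X}^{\alpha,\alpha}_{\mu}(t,x)z(x)\,d\mu(x)$, Lemma \ref{eql} plus the contraction constant of Theorem \ref{eu1} to absorb the $\mathcal{G}$-difference, and the same $\eta=\frac{(\mu(T)-\mu(0))\left\Vert \mathcal{X}^{\alpha,\alpha}_{\mu}(T,0)\right\Vert}{1-L_{\daleth}(\mu(T)-\mu(0))\left\Vert \mathcal{X}^{\alpha,\alpha}_{\mu}(T,0)\right\Vert}$. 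Your write-up is in fact slightly cleaner, since it consistently estimates $\left\Vert w-w_0\right\Vert_{C}$ where the paper's final display lapses into an undefined $u$.
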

\begin{proof}
Let   $w \in C\left( \left[0,T  \right], \mathbb{R}^n \right)$ be a solution of the inequality (\ref{sd}), i.e.
\begin{equation}\label{sonthm}
   \left\Vert   {_{0^+}^{{C}}\beth^{\alpha}_{\mu}}   \left[ w \left( t\right)- \sum_{i=1}^{d} A_{i} w \left( t-r_{i}\right) \right]  -Bw\left( t\right)  -\sum_{i=1}^{d}F_{i}w\left(t-r_{i}\right)  --\daleth\left(  t,w(t)\right)  \right\Vert \leq \varepsilon.
\end{equation}
Let  $w_0 \in C\left( \left[0,T  \right], \mathbb{R}^n \right)$ be  the unique solution of system (\ref{sorkok2}), so that
\begin{equation*}
 {_{0^+}^{{C}}\beth^{\alpha}_{\mu}}   \left[ w_0 \left( t\right)- \sum_{i=1}^{d} A_{i} w_0 \left( t-r_{i}\right) \right]  =Bw_0\left( t\right)  +\sum_{i=1}^{d}F_{i}w_0\left(t-r_{i}\right)  +\daleth\left(  t,w_0(t)\right)
\end{equation*}
for each $t\in \left[0,T  \right] $ and $0<\alpha<1$, $w_0\left(  t\right)=w\left(  t\right)$ with $-r \leq t \leq 0$. By combining Remark \ref{remarkkk} and equation (\ref{sonthm}), there exists a function $z \in C\left( \left[0,T  \right], \mathbb{R}^n \right)$  such that  $\left\Vert   z \left( t\right) \right\Vert < \varepsilon$,
 \begin{equation}\label{eqqqqqq}
 {_{0^+}^{{C}}\beth^{\alpha}_{\mu}}   \left[ w \left( t\right)- \sum_{i=1}^{d} A_{i} w \left( t-r_{i}\right) \right]  =Bw\left( t\right)  +\sum_{i=1}^{d}F_{i}w\left(t-r_{i}\right)  +\daleth\left(  t,w(t)\right)  +z \left( t\right).
 \end{equation}
 So we deduced the solution $w\left( t \right)$  from (\ref{eqqqqqq}) with the aid of Corollary \ref{soncor} and the function $\mathcal{G}$ ,
\begin{align*}
    w\left(t  \right)    &=\mathcal{G}w\left(  t\right)+   \int_{0}^{t} {\mathcal{X}^{\alpha,\alpha}_{\mu}}\left(t,x  \right)z\left( x\right)d\mu(x)
\end{align*}
So we have the following estimation
\begin{equation*}
 \left\Vert  \mathcal{G}w\left(  t\right) - w\left(  t\right)\right\Vert \leq  \int_{0}^{t}\left \Vert\mathcal{X}^{\alpha,\alpha}_\mu \left(  t,s\right) \right\Vert \left\Vert z\left(  s \right)\right\Vert d\mu(s) \leq  \left(\mu(T) - \mu(0)\right) \left \Vert\mathcal{X}^{\alpha,\alpha}_\mu \left(  T,0\right) \right\Vert\varepsilon.
\end{equation*}
By the fixed point property of the operator $\mathcal{G}$ given in the proof of Theorem \ref{eu1}, we obtain
\begin{align*}
 \left\Vert w_0\left(  t\right)-  w\left(  t\right)\right\Vert  & \leq  \left\Vert\mathcal{G} w_0\left(  t\right)-  \mathcal{G}w\left(  t\right)\right\Vert +  \left\Vert \mathcal{G}w\left(  t\right)-  w\left(  t\right)\right\Vert \\
   & \leq   L_{\daleth}  \left(\mu(T) - \mu(0)\right) \left \Vert\mathcal{X}^{\alpha,\alpha}_\mu \left(  T,0\right) \right\Vert \left\Vert  w-u \right\Vert_{C} \\
   & + \left(\mu(T) - \mu(0)\right) \left \Vert\mathcal{X}^{\alpha,\alpha}_\mu \left(  T,0\right) \right\Vert\varepsilon.
\end{align*}
By taking $C$-norm on the left hand side and then rearranging the above equation
\begin{equation*}
  \left(1- L_{\daleth} \left(\mu(T) - \mu(0)\right) \left \Vert\mathcal{X}^{\alpha,\alpha}_\mu\left(  T,0\right) \right\Vert  \right)  \left\Vert  w-u \right\Vert_{C} \leq  \left(\mu(T) - \mu(0)\right)\left \Vert\mathcal{X}^{\alpha,\alpha}_\mu \left(  T,0\right) \right\Vert\varepsilon,
\end{equation*}
then we get the desired result
\begin{equation*}
  \left\Vert  w-u\right\Vert_{C} \leq \eta . \varepsilon, \ \ \ \eta = \frac{\left(\mu(T) - \mu(0)\right)\left \Vert\mathcal{X}^{\alpha,\alpha}_\mu \left(  T,0\right) \right\Vert}{  \left(1- L_{\daleth} \left(\mu(T) - \mu(0)\right)\left \Vert\mathcal{X}^{\alpha,\alpha}_\mu \left(  T,0\right) \right\Vert \right) }>0.
\end{equation*}
\end{proof}
\begin{remark}
It is clear as day that all findings are valid when the coefficient matrices are commutative.
\end{remark}

\section{An illustrated example}
We will consider $\sqrt{t}$-Caputo type  fractional neutral differential multi-delayed equations
\begin{equation}
\left\{
\begin{array}
[c]{l}%
{_{0^+}^{{C}}\beth^{0.8}_{\mu}}   \left[ w \left( t\right)-  A_{1} w \left( t-0.3\right)-  A_{2} w \left( t-0.2\right) \right]  =Bw\left( t\right) \\  +F_{1}w\left(t-0.3\right)  +\frac{e^t}{4(1+e^t)}\sin\left( w(t) \right)   ,\ \ t\in\left(  0,0.6\right]  ,\\
\ \ \ \ w\left(  t\right)  =\left(  t^3 \ 2t+1\right)^T  ,\ \ -0.3\leq t\leq0,\\
\end{array}
\right.  \label{examplesorkok}%
\end{equation}
where $\mu(t)=\sqrt{t}$, and
$A_1=\begin{pmatrix}
   0.170 &  0.830 \\
   0&   0.350
   \end{pmatrix}$,
   $ A_2= \begin{pmatrix}
  0.36 & 0.64 \\
  0.07 &   0.11
   \end{pmatrix}$,
    $B=\begin{pmatrix}
   0.33 &  0 \\
   0.03 &   0.125
   \end{pmatrix}$,
    $F_1=\begin{pmatrix}
   0.43 &  470 \\
   0.03 &   0.125
   \end{pmatrix}$,
which are pairwise noncommutative matrices, e.g., $A_1A_2\neq A_2A_1$, $A_1B\neq BA_1$, $A_2B\neq BA_2$, $F_1A_2\neq A_2F_1$,etc. With the well-known maximum absolute row sum of the matrix $\left\Vert . \right\Vert_\infty$; $\left\Vert A_1 \right\Vert_\infty=1$, $\left\Vert A_2 \right\Vert_\infty=1$, $\left\Vert B \right\Vert_\infty=0.33$, and $\left\Vert F_1  \right\Vert_\infty=1$. One can easily check that $\daleth\left(  t,  w(t)\right)=\frac{e^t}{4(1+e^t)}\sin\left( w(t) \right)$ is  continuous in addition being the Lipschitz function with the Lipschitz constant $L_{\daleth}=0.25$   and  $$  L_{\daleth} \left(\mu(0.6) - \mu(0)\right) \mathcal{X}_{\mu,0.8,0.8}^{\left \Vert A_1+A_2 \right\Vert_\infty,\left \Vert B \right\Vert_\infty, \left \Vert F_1\right\Vert_\infty} \left(  0.6,0\right)\cong0.0509175 <1.$$Hence, all of conditions of Theorem \ref{eu1}  and  \ref{s} holds, so system (\ref{examplesorkok}) which has an unique solution is Ulam-Hyers stable.

\section{Conclusion}
In the current paper, we firstly introduce the $\mu$-neutral Caputo type fractional linear multi-delayed differential equations with non-permutable constant coefficient matrices. To obtain a representation of a solution for it, we newly define the $\mu$-neutral multi-delayed perturbation of two parameter Mittag-Leffler type matrix function. We consider the existence and uniqueness of solutions and Ulam-Hyers stability of the $\mu$-neutral fractional multi-delayed differential equations' system.

In terms of qualitative properties and fractional differential equations, this paper includes many different kinds of comprehensive studies\cite{i3}\cite{i5} \cite{6} because for some special cases of $\mu$, we obtain \textbf{the classical Caputo fractional derivative} \cite{sirvastava} \cite{sss1}, Hadamard fractional derivative \cite{sirvastava}, the Caputo–Hadamard fractional derivative \cite{sss2} and the Caputo–Erdélyi–Kober fractional derivative\cite{sss4}.

The next possible further work can be devoted to study asymptotic stability, exponential stability, finite time stability, and also Lyapunov type stability of  the $\mu$-neutral Caputo fractional multi-delayed differential equations with noncommutative coefficient matrices as well as its relative controllability and iterative learning controllability. Another possible direction for additional studies is  to extend our system (\ref{sorkok}) to $\mu$-fractional functional evolution equations and all possibilities as noted just above can be questioned once again for this new system.

\end{document}